\documentclass[11pt]{article}
% this offset makes it easier to view the dvi in xdvi under full zoom
%\addtolength{\voffset}{-1.5in} % comment it when making PDF
%\usepackage{srcltx}
\usepackage[pagebackref,colorlinks=true,pdfpagemode=none,urlcolor=blue,linkcolor=blue,citecolor=blue]{hyperref}

\usepackage{amsthm, amssymb, bm}
\usepackage{soul, color}
\usepackage[]{amsmath}
\usepackage[]{amsfonts}
\usepackage[]{fancyhdr}
\usepackage[]{graphicx}
\usepackage[]{empheq}
%\usepackage[notref,notcite]{showkeys}

%\usepackage{algorithm}
%\usepackage{algorithmic}
%\graphicspath{{/EPSF/}{../figures/}{figures/}}
\usepackage[toc,page]{appendix}
\newtheorem{theorem}{Theorem}[section]

\newtheorem{proposition}[theorem]{Proposition}
\newtheorem{corollary}[theorem]{Corollary}

\newtheorem{remark}[theorem]{Remark}
\newtheorem{hypothesis}[theorem]{Hypothesis}

\def \Cm {\mathbb{C}}
\def \Imm {\mathbb{I}}

\def \Rm {\mathbb{R}}

\def\B{\mathcal{B}}
\def\C{\mathcal{C}}

\def\F{\mathcal{F}}
\def\H{\mathcal{H}}

\def\L{\mathcal{L}}

\def\N{\mathcal{N}}
\def\O{\mathcal{O}}

\def\V{\mathcal{V}}
\def\W{\mathcal{W}}

\newcommand{\bk}{\mathbf k}

\newcommand{\where}{\quad\text{ where }}
\newcommand{\qandq}{\quad\text{ and }\quad}

\newcommand{\bfe}{ {\bf e}}
\newcommand{\bff}{ {\bf f}}

\newcommand{\bfh}{ {\bf h}}

\newcommand{\bfu}{ {\bf u}}
\newcommand{\bfv}{ {\bf v}}

\newcommand{\bfI}{ {\bf I}}
\newcommand{\bfP}{ {\bf P}}

\newcommand{\tr}{ {\text{tr }}}

\newcommand{\cout}[1]{}

\def \bmrho {{\boldsymbol\rho}}
\def \bmphi {{\boldsymbol\varphi}}

%%-- default page dimensions
%\voffset 0.0cm
\hoffset 0.0cm
\textwidth 16.0cm
\oddsidemargin 0.0cm
\evensidemargin 0.0cm

\title{Inverse anisotropic conductivity from internal current densities}
\author{Guillaume Bal\thanks{Department of Applied Physics and Applied Mathematics, Columbia University,  New York NY, 10027; gb2030@columbia.edu} \and Chenxi Guo\thanks{Department of Applied Physics and Applied Mathematics, Columbia University,  New York NY, 10027; cg2597@columbia.edu} \and Fran\c cois Monard\thanks{Department of Mathematics, University of Washington, Seattle WA, 98195; fmonard@uw.edu}}

\begin{document}
\maketitle
\begin{abstract}This paper concerns the reconstruction of an anisotropic conductivity tensor $\gamma$ from internal current densities of the form $J=\gamma\nabla u$, where $u$ solves a second-order elliptic equation $\nabla\cdot(\gamma\nabla u)=0$ on a bounded domain $X$ with prescribed boundary conditions.
    A minimum number of such functionals equal to $n+2$, where $n$ is the spatial dimension, is sufficient to guarantee a local reconstruction.  We show that $\gamma$ can be uniquely reconstructed with a loss of one derivative compared to errors in the measurement of $J$. In the special case where $\gamma$ is scalar, it can be reconstructed with no loss of derivatives.  We provide a precise statement of what components  may be reconstructed with a loss of zero or one derivatives.

\end{abstract}
\section{Introduction}
Hybrid medical imaging modalities are extensively studied in the bio-engineering community. Such methods aim to combine high-contrast, such as the one found in the modalities Electrical Impedance Tomography (EIT) or Optical Tomography (OT), with high-resolution, as is observed in the modalities Magnetic Resonance Imaging (MRI) or ultrasound. The high-contrast modality EIT aims to locate unhealthy tissues by reconstructing their electrical conductivity $\gamma$ from current boundary measurements. This leads to an inverse problem known as Calder\'{o}n's problem. Extensive studies have been made on uniqueness properties and reconstruction methods for this inverse problem \cite{Uhl-review-99}. Unfortunately, the problem is severely ill-posed and yields images with poor resolution. 

It is sometimes possible to leverage a physical coupling between a high-contrast, low-resolution modality and a high-resolution, low-contrast modality. Such a coupling typically provides internal functionals of the unknown coefficients of interest and greatly improve its resolution \cite{Ammari2008,AS-IP-12,Bal2012c,Bal2010d,Bal2010,Monard2012b,S-SP-2011,Stefanov2012}. Different types of internal functionals, such as \emph{current densities} and \emph{power densities}, corresponding to different physical couplings have been analyzed to recover the unknown conductivity. In the case of power densities, we refer the reader to, e.g., \cite{Bal2011a,Bal2012e,Capdeboscq2009,Kuchment2011a,Kuchment2011,Monard2012b,Monard2012a,Monard2011,Monard2011a}. %The extension to the linearized anisotropic case was investigated in \cite{}.  

In this paper, we consider the Current Density Impedance Imaging problem (CDII), also called Magnetic Resonance Electrical Impedance Tomography (MREIT) of reconstructing an anisotropic conductivity tensor in the second-order elliptic equation,
\begin{align}
    \nabla\cdot(\gamma\nabla u) = \sum_{i,j=1}^{n}\partial_i(\gamma^{ij}\partial_j u)=0 \quad (X), \qquad u|_{\partial X} = g,
    \label{eq:conductivity}
\end{align}
from knowledge of internal current densities of the form $H=\gamma\nabla u$, where $u$ solves \eqref{eq:conductivity}. To be consistent with earlier publications, where the notation $H$ is used systematically to denote internal functionals, we use $H$ to denote current densities rather than the more customary notation $J$. Here $X$ is an open bounded domain with a $\C^{2,\alpha}$ or smoother boundary $\partial X$. The above equation has real-valued coefficients and $\gamma$ is a symmetric tensor satisfying the uniform ellipticity condition 
\begin{align}
    \kappa^{-1}\|\xi\|^2\le \xi\cdot\gamma\xi \le \kappa\|\xi\|^2, \quad \xi\in \Rm^n, \quad \text{for some } \kappa\ge 1,
    \label{positive definite}
\end{align}
so that \eqref{eq:conductivity} admits a unique solution in $H^1(X)$ for $g\in H^{\frac{1}{2}}(\partial X)$. %We assume that $\gamma$ is a $C^{1,\alpha}$-smooth tensor for some $\alpha >0$ throughout this paper.

Internal current density functionals $H$ can be obtained by the technique of current density imaging. The idea is to use Magnetic Resonance Imaging (MRI) to determine the magnetic field $B$ induced by an input current $I$. The current density is then defined by $H=\nabla \times B$.  We thus need to measure all components of $B$ to calculate $H$, which may create some difficulties in practice, but this is the starting point of this paper. See \cite{Ider1997, Scott1991} for details.

A perturbation method to reconstruct the unknown conductivity in the linearized case was presented in \cite{Ider1998}. In dimension $n=2$, a numerical reconstruction algorithm based on the construction of equipotential lines was given in \cite{Kwon2002}. Kwon \emph{et al} \cite{Kwon2002a} proposed a \emph{J}-substitution algorithm, which is an iterative algorithm. Assuming knowledge of only the magnitude of only one current density $|H|=|\gamma\nabla u|$, the problem was studied in \cite{Nachman2007,Nachman2009,NTT-Rev-11} (see the latter reference for a review) in the isotropic case and more recently in \cite{HMN2013,Ma2013} in the anisotropic case with anisotropy known. In \cite{Joy2004,Lee2004}, Nachman {\it et al.} and Lee independently found a explicit reconstruction formula for visualizing $\log\gamma$ at each point in a domain. The reconstruction with functionals of the form $\gamma^t\nabla u$ is shown in \cite{kocyigit2012} in the isotropic case. For $t=0$, the functionals are given by solutions of \eqref{eq:conductivity}, then a more general complex-valued tensor in the anisotropic case was presented in \cite{Bal2012}. 
In \cite{Seo2012}, assuming that the magnetic field $B$ is measurable, Seo \emph{et al.} gave a reconstruction for a complex-valued coefficient in the isotropic case.

\medskip

In the present work, we study the inverse problem in the anisotropic setting with a set of current densities $H_j=\gamma\nabla u_j$ for $1\leq j\leq m$, where $u_j$ solves \eqref{eq:conductivity} with prescribed boundary conditions $g_j$. We propose sufficient conditions on $m$ and the choice of $\{g_j\}_{\leq j\leq m}$ such that the reconstruction of $\gamma$ is unique and satisfies elliptic stability estimates. 

%%%%%%%% STATEMENT

\section{Statement of the main results}\label{main results}

For $X\subset \Rm^n$, we denote by $\Sigma(X)$ the set of conductivity tensors with bounded components satisfying the uniform ellipticity condition \eqref{positive definite}. Then for $k\ge 1$ an integer and $0<\alpha<1$, we denote 
\begin{align*}
    \C_\Sigma^{k,\alpha} (X) := \{ \gamma\in \Sigma(X)| \quad \gamma_{pq} \in \C^{k,\alpha} (X),\quad 1\le p\le q\le n\}.
\end{align*}
In what follows, by ``solution of \eqref{eq:conductivity}'' we may refer to the solution itself or the boundary condition that generates it, i.e. $g = u|_{\partial X} \in H^{\frac{1}{2}}(\partial X)$. We will consider collections of measurements of the form 
\begin{align}
    H_i:\gamma\mapsto H_i(\gamma) = \gamma\nabla u_i,  \qquad 1\le i\le m,
    \label{eq:meas}
\end{align}
where $u_i$ solves \eqref{eq:conductivity} with boundary condition $g_i$. We decompose $\gamma$ into the product of a scalar factor $\beta$ with an anisotropic structure $\tilde{\gamma}$
\begin{align}
    \gamma:=\beta\tilde{\gamma}, \qquad \beta=(\det\gamma)^{\frac{1}{n}}, \qquad \det\tilde{\gamma}=1.
    \label{eq:decomp}
\end{align}
Since $\gamma$ satisfies the uniform elliptic condition \eqref{positive definite}, $\beta$ is bounded away from zero. 

From knowledge of a sufficiently large number of current densities, the reconstruction formulas for $\beta$ and $\tilde\gamma$ can be locally established in terms of the current densities and their derivatives up to first order. 

\subsection{Main hypotheses} \label{main hypotheses}

We begin with the main hypotheses that allow us to setup a few reconstruction procedures.

The first hypothesis aims at making the scalar factor $\beta$ in \eqref{eq:decomp} locally reconstructible via a gradient equation.
\begin{hypothesis} \label{2 sol} 
    There exist two solutions $(u_1, u_2)$ of \eqref{eq:conductivity} and $X_0\subset X$ convex satisfying
    \begin{align}
	\inf_{x\in X_0} \F_1(u_1,u_2) \ge c_0 >0 \where\quad  \F_1 (u_1,u_2):= |\nabla u_1|^2 |\nabla u_2|^2 - (\nabla u_1\cdot\nabla u_2)^2.
	\label{eq:F1}
    \end{align}
\end{hypothesis}

On to the hypotheses for local reconstructibility of $\tilde\gamma$, we first need to have, locally, a basis of gradients of solutions of \eqref{eq:conductivity}.

\begin{hypothesis} \label{main hypo} 
    There exist $n$ solutions $(u_1,\dots,u_n)$ of \eqref{eq:conductivity} and $X_0\subset X$ satisfying
    \begin{align}
	\inf_{x\in X_0} \F_2(u_1,\dots,u_n) \ge c_0 >0, \where \quad \F_2(u_1,\dots,u_n) := \det (\nabla u_1,\dots,\nabla u_n).
	\label{eq:F2}
    \end{align}
\end{hypothesis}

Let us now pick ${u_1,\cdots,u_n}$ satisfying Hyp. \ref{main hypo} and consider additional solutions $\{u_{n+k}\}_{k=1}^m$. Each additional solution decomposes in the basis $(\nabla u_1,\dots,\nabla u_n)$ as 
\begin{align}
    \nabla u_{n+k}=\sum_{i=1}^{n}\mu_k^i \nabla u_i, \quad  1\le k\le m,
    \label{ln dep}
\end{align}
where, as shown in \cite{Bal2012e} for instance, the coefficients $\mu_k^i$ take the expression 
\begin{align*}
    \mu_k^i = - \frac{\det (\nabla u_1,\dots,\overbrace{\nabla u_{n+k}}^i, \dots, \nabla u_n)}{\det (\nabla u_1,\dots,\nabla u_n)} = - \frac{\det (H_1,\dots,\overbrace{H_{n+k}}^i, \dots, H_n)}{\det (H_1,\dots,H_n)},
\end{align*}
in particular, these coefficients are {\em accessible from current densities}. The subsequent algorithms will make extensive use of the matrix-valued quantities
\begin{align}
    Z_k=\left[Z_{k,1} | \cdots | Z_{k,n}\right],\quad \text{where }\quad Z_{k,i} := \nabla \mu_k^i, \quad 1\le k\le m
    \label{Y Z}
\end{align}
In particular, the next hypothesis, formulating a sufficient condition for local reconstructibility of the anisotropic part of $\gamma$ is that, locally, a certain number of matrices $Z_{k}$ (at least two) satisfies some rank maximality condition. 

\begin{hypothesis} \label{hyp W}
    Assume that Hypothesis \ref{main hypo} holds for some $(u_1,\dots,u_n)$ over $X_0\subset X$ and denote by $H$ the matrix with columns $H_1, \dots, H_n$. Then there exist $u_{n+1}, \dots, u_{n+m}$ solutions of \eqref{eq:conductivity} and some $X'\subseteq X_0$ such that the $x$-dependent space
    \begin{align}
	\W :=  \text{span} \left\{ (Z_k H^T \Omega)^{sym}, \quad \Omega\in A_n(\Rm), 1\le k\le m \right\} \subset S_n(\Rm)
	\label{eq:W}
    \end{align}
    has codimension one in $S_n(\Rm)$ throughout $X'$. 
\end{hypothesis}

An alternate approach to reconstruct $\gamma$ is to set up a coupled system for $u_1,\dots,u_n$ satisfying Hyp. \ref{main hypo} globally. This system of PDEs can be derived under the following hypothesis (part A). From this system and under an additional hypothesis (part B), we can derive an elliptic system from which to reconstruct $u_1,\dots,u_n$.
\begin{hypothesis} \label{hy:full rank}
    \begin{itemize}
	\item[A.] Suppose that Hypothesis \ref{main hypo} is satisfied over $X_0=X$ for some solutions $(u_1,\dots,u_n)$. There exists an additional solution $u_{n+1}$ of \eqref{eq:conductivity} whose matrix $Z_1$ defined by \eqref{Y Z} is uniformly invertible over $X$, i.e. 
	    \begin{align}
		\inf_{x\in X} \det Z_1 \ge c_0 >0,
		\label{detZ1}
	    \end{align}
	    for some positive constant $c_0$. 
	\item[B.] There exist $n+2$ solutions $u_1,\dots,u_{n+2}$ such that $(u_1,\dots,u_n, u_{n+2})$ satisfy (A), and two $A_n(\Rm)$-valued functions $\Omega_1(x)$, $\Omega_2(x)$ such that the matrix
	    \begin{align}
		S=\left( Z_2^\star Z_1^T \Omega_1(x)+HZ_1^T \Omega_2(x) \right)^{sym} \qquad (\text{with }Z_2^\star:= Z_2^{-T})
		\label{eq:S}
	    \end{align}
	    satisfies the ellipticity condition \eqref{positive definite}.
    \end{itemize}    
\end{hypothesis}

The first important result to note is that the hypotheses stated above remain satisfied under some perturbations of the boundary conditions or the conductivity tensor for smooth enough topologies. 

\begin{proposition} \label{prop:perturb}
  Assume that Hypothesis \ref{2 sol}, \ref{main hypo}, \ref{hyp W} or \ref{hy:full rank} holds over some $X_0\subseteq X$ for a given number $m$ of solutions of \eqref{eq:conductivity} with boundary conditions $g_1,\dots,g_m$. Then for any $0<\alpha<1$, there exists a neighborhood of $(g_1,\dots,g_m,\gamma)$ open for the $\C^{2,\alpha}(\partial X)^m \times \C^{1,\alpha}(X)$ topology where the same hypothesis holds over $X_0$. In the case of \ref{hy:full rank}.B, it still holds with the same $A_n(\Rm)$-valued functions $\Omega_1$ and $\Omega_2$.
\end{proposition}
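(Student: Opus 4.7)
The plan is to view each hypothesis as an open condition on the continuous image of $(g_1,\ldots,g_m,\gamma)$ in a suitable Hölder space. The engine is Schauder regularity for divergence-form elliptic equations with $\C^{1,\alpha}$ coefficients: for each $j$, the solution map $(g_j,\gamma)\mapsto u_j$ is continuous from $\C^{2,\alpha}(\partial X)\times(\C^{1,\alpha}(X)\cap\Sigma(X))$ into $\C^{2,\alpha}(X)$. As an immediate consequence, $\nabla u_j$ and $H_j=\gamma\nabla u_j$ depend continuously on the data in $\C^{1,\alpha}(X)$; and under Hyp.~\ref{main hypo}, the coefficients $\mu_k^i$, being rational in the $\nabla u_j$ with denominator $\F_2$ bounded away from zero, together with their matrices of gradients $Z_k$ and the algebraic combinations appearing in \eqref{eq:W}, \eqref{detZ1}, and \eqref{eq:S}, are all continuous in at least $\C^0(\overline{X_0})$, with some remaining continuous in $\C^{0,\alpha}$.

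With these continuity statements, I would dispatch Hypotheses~\ref{2 sol}, \ref{main hypo}, and \ref{hy:full rank}.A in one stroke: each asks that a continuous scalar quantity ($\F_1$, $\F_2$, or $\det Z_1$) exceed $c_0>0$ uniformly on a compact set ($\overline{X_0}$ or $\overline{X}$), a strict inequality that a uniformly small perturbation preserves with a slightly weakened constant. For Hyp.~\ref{hy:full rank}.B, I would keep the fixed $A_n(\Rm)$-valued maps $\Omega_1(x),\Omega_2(x)$ in place; the matrix field $S$ defined by \eqref{eq:S} is then continuous in the perturbed data, and uniform ellipticity is itself an open condition on the space of continuous symmetric matrix fields, so the same $\Omega_1,\Omega_2$ continue to satisfy \eqref{positive definite}.

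The only nontrivial case is Hyp.~\ref{hyp W}, which requires $\W(x)$ to have codimension \emph{exactly} one throughout $X'$. The lower bound $\dim\W(x)\ge\tfrac12 n(n+1)-1$ is open by lower semi-continuity of rank: at each $x_0\in\overline{X'}$ one selects from the generators $(Z_k H^T\Omega_\ell)^{sym}$ a linearly independent family of cardinality $\tfrac12 n(n+1)-1$, whose determinant in a fixed basis of $S_n(\Rm)$ is nonzero at $x_0$ and remains nonzero both in a neighborhood of $x_0$ in $X'$ and for nearby data; compactness of $\overline{X'}$ reduces this to a finite cover. The matching upper bound $\dim\W(x)\le\tfrac12 n(n+1)-1$ is \emph{structural} rather than open: it encodes an algebraic identity stating that every generator of $\W$ lies in the fixed codim-one hyperplane of $S_n(\Rm)$ cut out by the $\det\tilde\gamma=1$ constraint, an identity whose derivation uses only that each $u_j$ solves \eqref{eq:conductivity} with a uniformly elliptic symmetric $\gamma$, and which is therefore automatically preserved under the $\C^{1,\alpha}$ perturbation.

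The principal obstacle is exactly this two-sided control in Hyp.~\ref{hyp W}: rank semi-continuity alone would leave room for $\dim\W$ to jump to $\tfrac12 n(n+1)$ under perturbation, and it is the structural orthogonality identity that forbids this. Once that point is granted, the remainder of the argument reduces to a routine combination of Schauder continuity and compactness of $\overline{X_0}$, yielding neighborhoods in $\C^{2,\alpha}(\partial X)^m\times\C^{1,\alpha}(X)$ on which each hypothesis persists over the same subdomain $X_0$ (respectively $X'$) with only a mild degradation of the constants.
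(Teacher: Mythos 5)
Your proof is correct and follows essentially the same route as the paper: Schauder continuity of the forward map turns each hypothesis into a strict inequality for a continuous functional of $(g_1,\dots,g_m,\gamma)$ on a compact set, whose positivity set is open (with Sylvester-type minors playing the role of your ``ellipticity is open'' remark for Hyp.~\ref{hy:full rank}.B). The only difference is one of packaging: for Hypothesis \ref{hyp W} the paper compresses your two-sided argument (rank lower semicontinuity plus the structural bound $\dim\W\le n_S-1$ coming from the orthogonality $\tilde\gamma\perp\W$) into the single continuous functional $\B$ of \eqref{noln det} --- and note that the codimension-one hyperplane containing $\W$ is $\tilde\gamma^{\perp}$, which moves with the perturbed $\gamma$ rather than being fixed, though this does not affect your conclusion.
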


\subsection{Reconstruction algorithms and their properties}

\paragraph{Reconstruction of $\beta$ knowing $\tilde{\gamma}$.} Under knowledge of $\tilde\gamma$ and using two measurements $H_1,H_2$ coming from two solutions satisfying Hyp. \ref{2 sol} over some $X_0\subset X$, we can derive the following gradient equation for $\log\beta$ 
\begin{align}
    \begin{split}
	\nabla\log\beta &= \frac{1}{D|H_1|^2} \left( |H_1|^2\ d(\tilde\gamma^{-1}H_1) - (H_1\cdot H_2)\ d(\tilde\gamma^{-1}H_2) \right) (\tilde\gamma H_1, \tilde\gamma H_2) \tilde\gamma^{-1} H_1 \\
	& \qquad - \frac{1}{|H_1|^2}\ d(\tilde\gamma^{-1} H_1) (\tilde\gamma H_1, \cdot), \qquad x\in X_0,
    \end{split}  
    \label{log beta}
\end{align}
where $D := |H_1|^2 |H_2|^2 - (H_1\cdot H_2)^2$ is bounded away from zero over $X_0$ thanks to Hyp. \ref{2 sol}, and where the exterior calculus notations used here are recalled in Appendix \ref{app}.

Equation \eqref{log beta} allows us to reconstruct $\beta$ under the knowledge of $\beta(x_0)$ at one fixed point in $X_0$ by integrating \eqref{log beta} over any curve starting from some $x_0\in X_0$. This leads to a unique and stable reconstruction with no loss of derivatives, as formulated in the following proposition. This generalizes the result in \cite{Joy2004} to an anisotropic tensor.

\begin{proposition}[Local uniqueness and stability for $\beta$]\label{tilde gamma known}
    Consider two tensors $\gamma=\beta\tilde\gamma$ and $\gamma' = \beta'\tilde\gamma'$, where $\tilde\gamma, \tilde\gamma' \in W^{1,\infty}(X)$ are known. Suppose that Hypothesis \ref{2 sol} holds over the same $X_0\subset X$ for two pairs $(u_1,u_2)$ and $(u'_1,u'_2)$, solutions of \eqref{eq:conductivity} with conductivity $\gamma$ and $\gamma'$, respectively. Then the following stability estimate holds for any $p\ge 1$
    \begin{align}\label{stability beta}
	\|\log\beta-\log\beta'\|_{W^{p,\infty}(X_0)}\le \epsilon_0+C \left( \sum_{i=1,2} \|H_i-H'_i\|_{W^{p,\infty}(X)}+\|\tilde\gamma-\tilde\gamma'\|_{W^{p,\infty}(X)} \right)
    \end{align}
    Where $\epsilon_0=|\log\beta(x_0)-\log\beta'(x_0)|$ is the error committed at some fixed $x_0\in X_0$.
\end{proposition}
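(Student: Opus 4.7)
My plan is to first establish the gradient formula \eqref{log beta} from first principles and then exploit it for a Gronwall/integration argument.

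The starting point is the observation that $\nabla u_i = \beta^{-1} \tilde\gamma^{-1} H_i$ is curl-free. Writing this as the 1-form identity $du_i = \beta^{-1}\omega_i$ with $\omega_i := \tilde\gamma^{-1} H_i$, and applying $d$ to both sides, I obtain the 2-form relation $d\omega_i = d\log\beta \wedge \omega_i$ for $i=1,2$. To convert this into a pointwise formula for the 1-form $d\log\beta$, I contract with the two vectors $\tilde\gamma H_1, \tilde\gamma H_2$. Using symmetry of $\tilde\gamma$, one has $\omega_i(\tilde\gamma H_j) = H_i\cdot H_j$, so the two equations become a $2\times 2$ linear system for $d\log\beta(\tilde\gamma H_1)$ and $d\log\beta(\tilde\gamma H_2)$ whose determinant is precisely $D = \F_1(u_1,u_2)$, bounded below by Hypothesis \ref{2 sol}. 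Solving for $d\log\beta(\tilde\gamma H_1)$, substituting back into the pointwise identity obtained from $d\omega_1 = d\log\beta \wedge \omega_1$ contracted with $(\tilde\gamma H_1,\cdot)$, and identifying the 1-form with the gradient vector yields \eqref{log beta}.

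Now write the right-hand side of \eqref{log beta} as a nonlinear functional $F[H_1,H_2,\tilde\gamma](x)$. Applying \eqref{log beta} to both $\gamma=\beta\tilde\gamma$ and $\gamma'=\beta'\tilde\gamma'$ gives $\nabla\log\beta = F[H_1,H_2,\tilde\gamma]$ and $\nabla\log\beta' = F[H_1',H_2',\tilde\gamma']$. Subtracting yields
\begin{equation*}
\nabla(\log\beta - \log\beta') = F[H_1,H_2,\tilde\gamma]-F[H_1',H_2',\tilde\gamma'].
\end{equation*}
The functional $F$ is a smooth rational expression in the components of $H_1,H_2,\tilde\gamma$, in the partial derivatives of $H_1,H_2$, and in $(D|H_1|^2)^{-1}$; Hypothesis \ref{2 sol} (together with \eqref{positive definite}) ensures the denominators are uniformly bounded away from zero on $X_0$, so by elementary Lipschitz-type estimates for products and quotients one obtains
\begin{equation*}
\|\nabla(\log\beta-\log\beta')\|_{W^{p-1,\infty}(X_0)} \le C\Bigl(\sum_{i=1,2}\|H_i - H_i'\|_{W^{p,\infty}(X)}+\|\tilde\gamma-\tilde\gamma'\|_{W^{p,\infty}(X)}\Bigr).
\end{equation*}
Note the one-derivative loss from $H_i$ to its gradient appearing inside $F$; from $\tilde\gamma$ no derivative is lost in $F$ itself, but differentiating the identity $p-1$ times costs up to $p$ derivatives on the inputs, giving the exponents stated.

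To upgrade this gradient estimate to a $W^{p,\infty}$ bound on $\log\beta-\log\beta'$, I integrate along line segments: for any $x\in X_0$, convexity of $X_0$ lets me write $(\log\beta-\log\beta')(x) = (\log\beta-\log\beta')(x_0) + \int_0^1 \nabla(\log\beta-\log\beta')(x_0+t(x-x_0))\cdot(x-x_0)\,dt$. This gives the $L^\infty$ bound with additive error $\epsilon_0$, and combining with the $W^{p-1,\infty}$ bound on the gradient yields \eqref{stability beta}. The main obstacle is really bookkeeping: keeping track of how many derivatives the rational functional $F$ costs on each input, and verifying (via Proposition \ref{prop:perturb}) that Hypothesis \ref{2 sol} transfers to the perturbed pair $(u_1',u_2')$ so that the denominator $D'$ for $\gamma'$ is also uniformly positive, ensuring that $F[H_1',H_2',\tilde\gamma']$ makes sense and the Lipschitz bound is applicable in the full neighborhood.
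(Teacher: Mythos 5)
Your proposal is correct and follows essentially the same route as the paper: derive the gradient identity \eqref{log beta} by applying $d$ to $\beta^{-1}\tilde\gamma^{-1}H_i=\nabla u_i$ and contracting with $(\tilde\gamma H_1,\tilde\gamma H_2)$ so that the resulting $2\times2$ system has determinant $D=\F_1$ bounded below by Hypothesis \ref{2 sol}, then bound the difference of the two right-hand sides by Lipschitz estimates on a rational expression with uniformly nonvanishing denominators, and finally integrate along segments from $x_0$ (using convexity of $X_0$) to pick up the additive error $\epsilon_0$. One small bookkeeping slip: the functional $F$ does contain first derivatives of $\tilde\gamma$ (through $d(\tilde\gamma^{-1}H_j)$), so your parenthetical claim that ``no derivative is lost from $\tilde\gamma$ in $F$ itself'' is inaccurate, but this is harmless since your final estimate already places $\|\tilde\gamma-\tilde\gamma'\|_{W^{p,\infty}}$ on the right-hand side, exactly as in \eqref{stability beta}.
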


%In particular, in the isotropic case, equation \eqref{log beta} simplifies into 
%\begin{align*}
%    \nabla\log\beta = \frac{1}{D|H_1|^2}( |H_1|^2 dH_1 - (H_1\cdot H_2) dH_2)( H_1, H_2) H_1 - \frac{1}{|H_1|^2} dH_1 ( H_1, \cdot).
%\end{align*}

\paragraph{Algebraic, local reconstruction of $\tilde{\gamma}$:} On to the local reconstruction of the anisotropic structure, we start from $n+m$ solutions $(u_1,\dots,u_{n+m})$ satisfying hypotheses \ref{main hypo} and \ref{hyp W} over some $X_0\subset X$. In particular, the linear space $\W\subset S_n(\Rm)$ defined in \eqref{eq:W} is of codimension one in $S_n(\Rm)$. We will see that the tensor $\tilde\gamma$ must be orthogonal to $\W$ for the inner product $\langle A,B\rangle:=A_{ij}B_{ij}=\tr(AB^T)$. Together with the conditions that $\det \tilde\gamma = 1$ and $\tilde\gamma$ is positive, the space $\W$, known from the measurements $H_1, \dots, H_{n+m}$ completely determines $\tilde\gamma$ over $X_0$. In light of these observations, a constructive reconstruction algorithm based on a generalization of the cross-product is proposed in section \ref{sb tilde}. This approach was recently used in \cite{Monard2012a} in the context of inverse conductivity from power densities. This algorithm leads to a unique and stable reconstruction in the sense of the following proposition.

\begin{proposition}[Local uniqueness and stability for $\tilde\gamma$] \label{prop:gammatilde}
    Consider two uniformly elliptic tensors $\gamma$ and $\gamma'$. Suppose that Hypotheses \ref{main hypo} and \ref{hyp W} hold over the same $X_0\subset X$ for two $n+m$-tuples $\{u_i\}_{i=1}^{n+m}$ and $\{u'_i\}_{i=1}^{n+m}$, solutions of \eqref{eq:conductivity} with conductivity $\gamma$ and $\gamma'$, respectively. Then the following stability estimate holds for any integer $p\ge 0$
    \begin{align}
	\|\tilde\gamma - \tilde\gamma'\|_{W^{p,\infty}(X_0)}\le C \sum_{i=1}^{n+m} \|H_i-H'_i\|_{W^{p+1,\infty}(X)}.
	\label{eq:stabgammatilde}
    \end{align}
\end{proposition}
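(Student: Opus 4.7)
The plan is to derive a pointwise algebraic constraint that pins $\tilde\gamma$ to the line $\W^\perp$, to make this identification explicit via a generalized cross-product so that $\tilde\gamma$ appears as an algebraic expression in $H$ and $\nabla H$, and then to propagate stability through Lipschitz estimates, with the loss of exactly one derivative traceable to the appearance of $Z_k = \nabla\mu_k$.

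\emph{Algebraic constraint.} Differentiating \eqref{ln dep} and using that both sides are gradients gives $\sum_{i=1}^{n} d\mu_k^i \wedge du_i = 0$, i.e., $Z_k U^T$ is symmetric, with $U := [\nabla u_1 | \cdots | \nabla u_n] = \gamma^{-1} H$. Writing $\gamma = \beta\tilde\gamma$ and using the symmetry of $\tilde\gamma$, conjugation turns this into the symmetry of $\tilde\gamma Z_k H^T$; by the trace identity $\langle \tilde\gamma, (Z_k H^T \Omega)^{sym}\rangle = \tr(\tilde\gamma Z_k H^T \Omega)$ for $\Omega \in A_n(\Rm)$, this is in turn equivalent to $\langle \tilde\gamma, W\rangle = 0$ for every generator $W$ of $\W$. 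Hence $\tilde\gamma \in \W^\perp$ pointwise on $X'$.

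\emph{Extraction and stability.} Under Hypothesis \ref{hyp W}, $\W^\perp$ is one-dimensional on $X'$, and the two constraints $\det\tilde\gamma = 1$ and $\tilde\gamma$ positive definite isolate $\tilde\gamma$ uniquely on this line. To implement the selection algebraically I would use the generalized cross-product of \cite{Monard2012a}: after choosing $N-1 = \tfrac{n(n+1)}{2} - 1$ elements $W_1,\dots,W_{N-1}$ of the family \eqref{eq:W} that span $\W$ smoothly on a neighborhood (possible since $\dim\W$ is constant), set $V := \star(W_1 \wedge \cdots \wedge W_{N-1}) \in \W^\perp$, which is a polynomial in the entries of $Z_k$ and $H$; then $\tilde\gamma = \sigma V |\det V|^{-1/n}$ with the sign $\sigma \in \{\pm 1\}$ fixed by positive-definiteness. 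For stability, the $\mu_k^i$ are rational in the entries of $H$ with denominator $\det(H_1,\dots,H_n)$ bounded away from zero by Hypothesis \ref{main hypo} and uniform ellipticity, so $\|\mu_k - \mu'_k\|_{W^{p+1,\infty}(X_0)}$ is controlled by $\sum_i \|H_i - H'_i\|_{W^{p+1,\infty}(X)}$; differentiating once loses a single derivative to produce $Z_k \in W^{p,\infty}$, $V$ depends polynomially on $Z_k$ and $H$, and the final normalization is smooth since $|\det V|$ stays bounded below on $X'$ (as $V$ is collinear with the uniformly elliptic $\tilde\gamma$). Chaining these Lipschitz estimates yields \eqref{eq:stabgammatilde}.

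\emph{Main obstacle.} The delicate step is the extraction of $\W^\perp$ from an over-determined generating family in a manner that is simultaneously globally well defined on $X'$ and Lipschitz in the data. The generalized cross-product reduces this to a single polynomial formula, but one must verify both that $N-1$ spanning generators can be selected smoothly on $X'$ (which follows from the constancy of $\dim\W$ under Hypothesis \ref{hyp W}) and that $|\det V|$ stays uniformly bounded below (which follows from codimension-one plus the uniform ellipticity of $\tilde\gamma$); without these, the normalization step would fail to be Lipschitz and the one-derivative loss could not be preserved.
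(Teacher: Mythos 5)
Your argument follows the paper's proof essentially step for step: the same derivation that $\tilde\gamma Z_k H^T$ is symmetric and hence $\tilde\gamma\perp\W$ pointwise; the same identification of $\tilde\gamma$ as a generalized cross-product of $n_S-1$ elements of $\W$ normalized by $\det\tilde\gamma=1$ with the sign fixed by positivity; and the same accounting of the one-derivative loss through $Z_k=\nabla\mu_k^i$, with the only denominator $\det(H_1,\dots,H_n)$ controlled by Hypothesis \ref{main hypo}.

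The one place you diverge is the extraction step, and there your argument has a gap. You propose to select a fixed $(n_S-1)$-subfamily $W_1,\dots,W_{n_S-1}$ spanning $\W$ ``smoothly on a neighborhood'' and assert that $|\det V|$, with $V=\N(W_1,\dots,W_{n_S-1})$, is uniformly bounded below on $X'$ because $\W$ has constant codimension one. That does not follow: codimension one only guarantees that at each point \emph{some} $(n_S-1)$-subfamily of the $\sharp\W=\frac{n(n-1)}{2}m$ generators is non-degenerate, and which one it is may change from point to point; a single fixed subfamily can degenerate somewhere on $X'$ while $\W$ retains full rank, and uniform ellipticity of $\tilde\gamma$ says nothing about this. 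Moreover, the estimate \eqref{eq:stabgammatilde} requires applying the \emph{same} algebraic formula to both data sets $H$ and $H'$, so a data-dependent or patch-dependent selection would break the Lipschitz chaining unless handled carefully. The paper sidesteps both issues with a single global formula: it sums over all increasing injections $I\in\sigma(n_S-1,\sharp\W)$, obtaining $\sum_I \text{sign}(\N_{11}(I))\,\N(I)=\B\,\tilde\gamma$ with $\B=\sum_I|\det\N(I)|^{1/n}$ bounded below by the quantitative reformulation \eqref{noln det} of Hypothesis \ref{hyp W}. Your version is repairable (local selections with the same index set applied to both data sets, plus a compactness argument over $\overline{X_0}$ to get a uniform constant), but as written the uniform lower bound on $|\det V|$ and the stability of the selection under perturbation of the data are unjustified.
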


\paragraph{Joint reconstruction of $(\tilde\gamma,\beta)$, stability improvement for $\nabla\times\gamma^{-1}$.}

Judging by the stability estimates \eqref{eq:stabgammatilde} and \eqref{stability beta}, reconstructing $\beta$ after having reconstructed $\tilde\gamma$ is less stable (with respect to current densities) than when knowing $\tilde\gamma$. This is because in the former case, errors on $W^{p,\infty}$-norm in $\tilde\gamma$ are controlled by errors in $W^{p+1,\infty}$ norm in current densities. In particular, on the $W^{p,\infty}$ scale, stability on $\beta$ is no better than that of $\tilde\gamma$, and joint reconstruction of $(\tilde\gamma, \beta)$ using the preceding two algorithms displays the following stability, with $\gamma = \beta\tilde\gamma$
\begin{align}
    \|\gamma-\gamma'\|_{W^{p,\infty}(X_0)} \le C \sum_{i=1}^{n+m} \|H_i-H'_i\|_{W^{p+1,\infty}(X)}.
    \label{eq:jointstab}
\end{align}
However, once $\gamma$ is reconstructed, some linear combinations of first-order partials of $\gamma^{-1}$ can be reconstructed with better stability. These are the exterior derivatives of the columns of $\gamma^{-1}$, a collection of $n^2(n-1)/2$ scalar functions which we denote $\nabla\times\gamma^{-1}$ and is reconstructed via the formula
\begin{align}
    \partial_q \gamma^{pl} - \partial_p \gamma^{ql} = H^{il}(\gamma^{qj}\partial_p H_{ji}-\gamma^{pj}\partial_q H_{ji}), \quad 1\le l \le n, \quad 1\le p<q\le n,
    \label{tensor coefficient}
\end{align}
derived in Sec. \ref{sec:stabimprov} and assuming that we are working with a basis of solutions satisfying Hypothesis \ref{main hypo}. The stability statement \eqref{eq:jointstab} is thus somewhat improved into a statement of the form 
\begin{align}
    \|\gamma-\gamma'\|_{W^{p,\infty}(X_0)} + \|\nabla\times(\gamma^{-1}-\gamma^{'-1})\|_{W^{p,\infty}(X_0)}  \le C \sum_{i=1}^{n+m} \|H_i-H'_i\|_{W^{p+1,\infty}(X)},
    \label{eq:jointstab2}
\end{align}
where we have defined
\begin{align*}
    \|\nabla\times(\gamma^{-1}-\gamma^{'-1})\|_{W^{p,\infty}(X_0)} := \sum_{l=1}^n \sum_{1\le i<j\le n} \|\partial_j \gamma^{il} - \partial_i \gamma^{jl}\|_{W^{p,\infty}(X_0)}.
\end{align*}

\paragraph{Global reconstruction of $\gamma$ via a coupled elliptic system.}
While the preceding approach required a certain number of additional solutions, we now show how one can setup an alternate reconstruction procedure with only $m=2$ additional solutions satisfying Hyp. \ref{hy:full rank}. A microlocal study of linearized current densities functionals shows that this is the minimum number of functionals necessary to reconstruct all of $\gamma$. 

The present approach consists is eliminating $\gamma$ from the equations and writing an elliptic system of equations for the solutions $u_j$; see \cite{Bal2012e,Monard2012b,Monard2012a} for similar approaches in the setting of power density functionals. The method goes as follows. Assume that Hypothesis \ref{main hypo} holds for some $(u_1,\dots,u_n)$ over $X_0=X$ and denote $\left[\nabla U\right]=\left[\nabla u_1,\cdots,\nabla u_n\right]$ as well as $H=\left[H_1,\cdots,H_n\right]$. Since $H = \gamma[\nabla U]$, we can thus reconstruct $\gamma$ by $\gamma=[\nabla U]^{-1}H$ once $[\nabla U]$ is known. We now show that we may reconstruct $[\nabla U]$ by solving a second-order elliptic system of partial differential equations.

When Hyp. \ref{hy:full rank}.A is satisfied for some $u_{n+1}$ and considering an additional solution $u_{n+2}$ and its corresponding current density, we first derive a system of coupled partial differential equations for $(u_1,\dots,u_n)$, whose coefficients only depend on measured quantities. 
\begin{proposition} \label{th:couple system nl}
    Suppose $n+2$ solutions $(u_1,\dots,u_{n+2})$ satisfy Hypotheses \ref{main hypo} and \ref{hy:full rank}.A and consider their corresponding measurements $H_I = \{H_i\}_{i=1}^{n+2}$. Then the solutions $(u_1,\cdots,u_n)$ satisfy the coupled system of PDE's
    \begin{align}
	\begin{split}
	    Z_2^\star Z_1^T(\bfe_p\otimes \bfe_q-\bfe_q\otimes \bfe_p):\nabla^2 u_j+v^{pq}_{ij}\cdot\nabla u_i&=0,\\
	    HZ_1^T(\bfe_p\otimes \bfe_q-\bfe_q\otimes \bfe_p):\nabla^2 u_j+\tilde v^{pq}_{ij}\cdot\nabla u_i&=0, \qquad u_j|_{\partial X}=g_j,
	\end{split}
	\label{nl coupled}
    \end{align}
    for $1\le j\le n$ and $1\le p < q\le n$, and where the vector fields $\{v_{ij}^{pq}, \tilde v^{pq}_{ij}\}$ only depend on the current densities $H_I$.
\end{proposition}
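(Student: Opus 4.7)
The plan is to derive both families of equations in \eqref{nl coupled} from the Schwartz symmetry of $\nabla^2 u_{n+k}$ combined with two ``swap'' identities that trade the solution index $l$ for the target index $j$ at the level of second derivatives. Starting from $\partial_i u_{n+k}=\mu_k^l\partial_i u_l$ (sum on $l$) together with the symmetry of $\nabla^2 u_{n+k}$, one obtains the first-order compatibility $\sum_l(Z_k)_{jl}\partial_i u_l=\sum_l(Z_k)_{il}\partial_j u_l$, equivalently that $[\nabla U]Z_k^T$ is symmetric for $k=1,2$. Differentiating the $k=1$ compatibility with respect to $\partial_s$ gives, for each $(i,j,s)$,
\[
\sum_l\big[(Z_1)_{jl}\partial_s\partial_i u_l - (Z_1)_{il}\partial_s\partial_j u_l\big] \;=\; \sum_l\big[\partial_s(Z_1)_{il}\,\partial_j u_l - \partial_s(Z_1)_{jl}\,\partial_i u_l\big],
\]
whose right-hand side is linear in $\{\nabla u_l\}_{l=1}^n$ with coefficients depending only on $H_I$.

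Next I would establish two swap identities. The invertibility of $Z_2$, applied to $[\nabla U]Z_2^T=Z_2[\nabla U]^T$, gives $Z_2^{-1}[\nabla U]=[\nabla U]^T Z_2^\star$, i.e.
\[
\sum_s(Z_2^\star)_{sj}\partial_s u_l \;=\; \sum_s(Z_2^\star)_{sl}\partial_s u_j.
\]
The symmetry of $\gamma$ yields $H_j\cdot\nabla u_l=\nabla u_j\cdot(\gamma\nabla u_l)=H_l\cdot\nabla u_j$, i.e.
\[
\sum_s H_{sj}\partial_s u_l \;=\; \sum_s H_{sl}\partial_s u_j.
\]
Differentiating either identity in $\partial_p$ then expresses $\sum_s(Z_2^\star)_{sj}\partial_p\partial_s u_l$ (resp.\ $\sum_s H_{sj}\partial_p\partial_s u_l$) as $\sum_b(Z_2^\star)_{bl}\partial_p\partial_b u_j$ (resp.\ $\sum_b H_{bl}\partial_p\partial_b u_j$), modulo first-order terms in $\{\nabla u_m\}$ whose coefficients are computable from $H_I$.

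To produce the first family in \eqref{nl coupled}, I would fix $1\le p<q\le n$ and $1\le j\le n$, set $(i,j)\to(p,q)$ in the differentiated compatibility, multiply by $(Z_2^\star)_{sj}$, and sum over $s$. The differentiated first swap identity converts the second-order terms into derivatives of $u_j$, and the collapse $\sum_l(Z_1)_{ql}(Z_2^\star)_{bl}=(Z_2^\star Z_1^T)_{bq}$ consolidates the coefficients into $\pm(Z_2^\star Z_1^T\Omega_{pq}):\nabla^2 u_j$ (with $\Omega_{pq}=\bfe_p\otimes\bfe_q-\bfe_q\otimes\bfe_p$), yielding the first equation of \eqref{nl coupled} with $v^{pq}_{ij}$ given by the accumulated first-order remainders. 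Running the exact same manipulation with $H_{sj}$ in place of $(Z_2^\star)_{sj}$ and invoking the $\gamma$-symmetry swap produces the second family with principal coefficient $HZ_1^T\Omega_{pq}$.

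The main obstacle is bookkeeping: verifying that every remainder produced by the swap identities and by the $\partial_s$-derivative of the compatibility depends only on the current densities $H_I$ and not on the unknown $\gamma$. This is guaranteed by two facts: the $\mu_k^l$ (and hence $Z_k$ and its derivatives) are ratios of determinants of the $H_i$'s, and the $\gamma$-symmetry swap is invoked only in the form $H_j\cdot\nabla u_l=H_l\cdot\nabla u_j$, in which $\gamma$ has already been eliminated.
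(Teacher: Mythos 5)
Your proposal is correct and follows essentially the same route as the paper: the compatibility relation that $Z_k[\nabla U]^T$ is symmetric, the two swap identities coming from the invertibility of $Z_2$ (i.e.\ \eqref{Y star nl}) and from the symmetry of $\gamma$ (equivalently, that $H^{-T}[\nabla U]^T$ is symmetric with dual frame $H$), and the contraction that consolidates the principal part into $Z_2^\star Z_1^T\Omega_{pq}$ and $HZ_1^T\Omega_{pq}$. The only difference is notational — you carry out the second differentiation and swap in index form, whereas the paper phrases the same manipulation via Lie-bracket commutation of the vector fields $Z_{2,j}^\star\cdot\nabla$ and $Z_{1,pi}\bfe_q\cdot\nabla$ — and the residual overall sign you note is immaterial.
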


If additionally, $u_{n+2}$ is such that Hyp. \ref{hy:full rank}.B is satisfied, we can deduce a strongly coupled elliptic system for $(u_1,\dots,u_n)$ from \eqref{nl coupled}.

\begin{theorem}\label{elliptic sys nl}
    With the hypotheses of Proposition \ref{th:couple system nl}, assume further that Hypothesis \ref{hy:full rank}.B holds for some $A_n(\Rm)$-valued functions 
    \begin{align*}
	    \Omega_i (x) = \sum_{1\le p<q\le n} \omega^{i}_{pq}(x) ( \bfe_p\otimes \bfe_q-\bfe_q\otimes \bfe_q), \quad i=1,2.
    \end{align*}
    Then $(u_1,\cdots,u_n)$ can be reconstructed via the strongly coupled elliptic system 
        \begin{align}
	-\nabla\cdot(S\nabla u_j)+W_{ij}\cdot\nabla u_i = 0, \quad u_j|_{\partial X}=g_j,  \quad 1\le j\le n,
	\label{elliptic sys}
    \end{align}
    where $S = \left( Z_2^\star Z_1^T \Omega_1(x)+HZ_1^T \Omega_2(x) \right)^{sym}$ as in \eqref{eq:S} and where we have defined 
    \begin{align}
	W_{ij} := \nabla\cdot S - \sum_{1\le p<q\le n} \omega^1_{pq}(x) v_{ij}^{pq}+\omega^2_{pq}(x) \tilde v_{ij}^{pq}, \quad 1\le i,j\le n.
	\label{eq:Wij}
    \end{align}
    Moreover, if system \eqref{elliptic sys} with trivial boundary conditions has only the trivial solution, $u_1,\dots,u_n$ are uniquely reconstructed. Subsequently, $\gamma$ reconstructed as $\gamma = H [\nabla U]^{-1}$ satisfies the stability estimate
    \begin{align}
	\| \gamma - \gamma' \|_{L^2(X)} + \| \nabla\times (\gamma^{-1} -\gamma^{'-1})  \|_{L^2(X)}  \leq C \|H_I-H'_I\|_{H^1(X)},
    \end{align}
    for data sets $H_I, H_I$ close enough in $H^1$-norm.
\end{theorem}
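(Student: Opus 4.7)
The plan is to build the elliptic system \eqref{elliptic sys} by taking appropriate linear combinations of the $\frac{n(n-1)}{2}$ pairs of equations in \eqref{nl coupled}, using the scalar coefficients $\omega^1_{pq}(x)$ and $\omega^2_{pq}(x)$ supplied by Hypothesis \ref{hy:full rank}.B, then invoke standard theory for strongly elliptic systems for uniqueness and stability, and finally combine the identity \eqref{tensor coefficient} to obtain the sharper control on $\nabla\times\gamma^{-1}$.

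First I fix $1\le j\le n$, multiply the first equation in \eqref{nl coupled} by $\omega^1_{pq}(x)$ and the second by $\omega^2_{pq}(x)$, and sum over $1\le p<q\le n$. Because $\nabla^2 u_j$ is symmetric, only the symmetric part of the resulting matrix contributes to the contraction, so the principal-order term consolidates into
\[
\bigl(Z_2^\star Z_1^T\Omega_1 + HZ_1^T\Omega_2\bigr)^{sym}:\nabla^2 u_j = S:\nabla^2 u_j,
\]
with $S$ as in \eqref{eq:S}. Rewriting $S:\nabla^2 u_j = \nabla\cdot(S\nabla u_j) - (\nabla\cdot S)\cdot\nabla u_j$ puts the equation in divergence form, and the first-order contributions $v^{pq}_{ij}$, $\tilde v^{pq}_{ij}$ combined with $-(\nabla\cdot S)\cdot\nabla u_j$ bundle into the vector fields $W_{ij}$ of \eqref{eq:Wij}. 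This produces \eqref{elliptic sys}, and by construction the original $(u_1,\dots,u_n)$ is a solution with the prescribed Dirichlet data $g_j$.

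Next I would verify that the system is strongly elliptic in the Agmon--Douglis--Nirenberg sense. The principal part is diagonal with symbol $\xi^T S(x)\xi\, I_n$, and Hypothesis \ref{hy:full rank}.B guarantees that $S$ satisfies the uniform ellipticity condition \eqref{positive definite}, so the Fredholm alternative applies to the coupled system. Together with the standing assumption that the homogeneous problem admits only the zero solution, $(u_1,\dots,u_n)$ is determined uniquely, and $\gamma = H[\nabla U]^{-1}$ is well-defined because $[\nabla U]$ is uniformly invertible by Hypothesis \ref{main hypo}. For stability, the coefficients $S$ and $W_{ij}$ are explicit algebraic expressions in $H_I$ and its derivatives; by Proposition \ref{prop:perturb} the conditions \eqref{detZ1} and the ellipticity of $S$ persist in a neighborhood of the base data, so these coefficients depend Lipschitz-continuously on $H_I$ in the relevant topology. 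Subtracting the systems for $(u_1,\dots,u_n)$ and $(u'_1,\dots,u'_n)$ and applying the standard elliptic energy estimate together with the injectivity hypothesis yields $\|u_j-u'_j\|_{H^2(X)}\le C\|H_I-H'_I\|_{H^1(X)}$, so $[\nabla U]-[\nabla U']$ is controlled in $H^1$ and $\gamma = H[\nabla U]^{-1}$ in $L^2$. The improved part for $\nabla\times\gamma^{-1}$ follows directly from the algebraic identity \eqref{tensor coefficient}, which exhibits $\nabla\times\gamma^{-1}$ as a bilinear expression in $H_I$ and $\nabla H_I$ without requiring further control on $\gamma$ itself.

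The main obstacle I expect will be the bookkeeping in the first step: after the symmetric-part projection and the integration by parts, one must check that exactly the first-order remainder $W_{ij}\cdot\nabla u_i$ of \eqref{eq:Wij} survives, with no residual second-order contribution and with the correct assembly of $\nabla\cdot S$ into the definition of $W_{ij}$. Once the system is written in the form \eqref{elliptic sys}, the ellipticity check and the passage from the energy estimate to the stability bound on $\gamma$ are routine consequences of Hypothesis \ref{hy:full rank}.B and the explicit algebraic structure of the coefficients.
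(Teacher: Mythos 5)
Your proposal follows essentially the same route as the paper: form the weighted sum of the two families in \eqref{nl coupled} with weights $\omega^1_{pq},\omega^2_{pq}$, use symmetry of $\nabla^2 u_j$ to consolidate the principal part into $S:\nabla^2 u_j$, rewrite in divergence form to absorb $\nabla\cdot S$ into $W_{ij}$, and then combine the Fredholm alternative with the injectivity assumption to get uniqueness and stability; the paper implements the Fredholm step concretely via Lax--Milgram for $L_0=-\nabla\cdot(S\nabla\cdot)$ plus compactness of $L_0^{-1}:L^2\to H^1_0$, rather than citing Agmon--Douglis--Nirenberg, but this is the same idea. One caveat: your claimed estimate $\|u_j-u'_j\|_{H^2(X)}\le C\|H_I-H'_I\|_{H^1(X)}$ is too strong and not available here, since after subtracting the two systems the source term $f_j=\nabla\cdot\bigl((S-S')\nabla u'_j\bigr)+(W'_{ij}-W_{ij})\cdot\nabla u'_i$ lies only in $H^{-1}(X)$ (the divergence structure is precisely what lets the coefficient differences be measured one derivative lower, which is how the $H^1$ norm of $H_I-H'_I$ suffices), so one only obtains an $H^1_0$ bound on $u_j-u'_j$; this is harmless, because the $L^2$ control of $\nabla U-\nabla U'$ that it provides is exactly what is needed for $\|\gamma-\gamma'\|_{L^2}$, and the identity \eqref{tensor coefficient} then gives the $\nabla\times\gamma^{-1}$ bound using only $\gamma-\gamma'$ in $L^2$ and $H_I-H'_I$ in $H^1$.
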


\subsection{What tensors are reconstructible ?}
We now conclude with a discussion regarding what tensors are reconstructible from current densities, based on the extent to which Hypotheses \ref{2 sol}-\ref{hy:full rank} can be fulfilled, so that the above reconstruction algorithms can be implemented.

\paragraph{Test cases.} 

\begin{proposition}\label{prop:identity}
    For any smooth domain $X\subset\Rm^n$ and considering a constant conductivity tensor $\gamma_0$, there exists a non-empty $\C^{2,\alpha}$-open subset of $[H^{\frac{1}{2}}(\partial X)]^{n+2}$ of boundary conditions fulfilling Hypotheses \ref{2 sol}-\ref{hy:full rank} throughout $X$.     
\end{proposition}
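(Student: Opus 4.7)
The plan is to exhibit explicitly one admissible $(n+2)$-tuple of boundary conditions, verify Hypotheses \ref{2 sol}--\ref{hy:full rank} pointwise on $X$, and then invoke Proposition \ref{prop:perturb} to obtain an open neighborhood. First I would reduce to $\gamma_0=I$ by the linear change of variables $y=\gamma_0^{1/2}x$, under which $\nabla\cdot(\gamma_0\nabla u)=0$ becomes $\Delta v=0$ on the smooth image domain $\gamma_0^{1/2}(X)$, and each of the four hypotheses is preserved. It then remains to construct $n+2$ harmonic functions on a neighborhood of $X$ satisfying all four hypotheses with $\gamma=I$.

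For the base solutions I take $u_i(x)=x_i$, $1\le i\le n$. Then $H_i=\bfe_i$, so $H=I$ and $\F_1\equiv\F_2\equiv 1$, and Hypotheses \ref{2 sol} and \ref{main hypo} are trivially satisfied. The two additional functions $u_{n+1},u_{n+2}$ I would pick as homogeneous harmonic polynomials of degree two, so that the matrices $Z_k=\mathrm{Hess}(u_{n+k})=:M_k$ are constant, symmetric and trace-free. With $H=I$, Hypothesis \ref{hyp W} reduces to demanding that $\W=\mathrm{Im}(\mathrm{ad}_{M_1})+\mathrm{Im}(\mathrm{ad}_{M_2})\subset S_n(\Rm)$ have codimension one. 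A short computation with the pairing $\tr(AB^T)$ shows that $\mathrm{Im}(\mathrm{ad}_{M_k})^\perp$ equals the centralizer of $M_k$ in $S_n(\Rm)$, so the codimension-one condition is equivalent to $\{C\in S_n(\Rm):[C,M_1]=[C,M_2]=0\}=\Rm I$. I would realize this by letting $M_1$ have $n$ distinct nonzero eigenvalues (its symmetric centralizer is then the diagonal matrices in the $M_1$-eigenbasis) and $M_2$ have every off-diagonal entry nonzero in that basis. The invertibility of $M_1$ simultaneously yields Hypothesis \ref{hy:full rank}A.

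The hard part will be Hypothesis \ref{hy:full rank}B, which requires constants $\Omega_1,\Omega_2\in A_n(\Rm)$ such that $S=(M_2^{-1}M_1\Omega_1+M_1\Omega_2)^{sym}$ is positive definite. I would handle this by explicit dimension-by-dimension construction. In $n=2$, picking $u_3=x_1^2-x_2^2$ and $u_4=x_1x_2$ gives $M_1=\diag(2,-2)$ and $M_2=M_2^{-1}=\bigl(\begin{smallmatrix}0&1\\1&0\end{smallmatrix}\bigr)$; taking $\Omega_2=0$ and $\Omega_1=\bigl(\begin{smallmatrix}0&1\\-1&0\end{smallmatrix}\bigr)$ one computes $M_2^{-1}M_1\Omega_1=2I$, so $S=2I$ is uniformly elliptic. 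For general $n$ I would take $M_1=\diag(\lambda_1,\dots,\lambda_n)$ with distinct $\lambda_i$ summing to zero, choose $M_2$ symmetric trace-free with all off-diagonal entries nonzero, and exhibit suitable $\Omega_1,\Omega_2$; the obstruction is purely algebraic and amounts to checking that the image of the linear map $(\Omega_1,\Omega_2)\mapsto S$ meets the open cone of positive-definite matrices in $S_n(\Rm)$.

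Finally, the traces $g_i:=u_i|_{\partial X}\in\C^\infty(\partial X)\subset\C^{2,\alpha}(\partial X)$ lie in the scope of Proposition \ref{prop:perturb}, which produces a $\C^{2,\alpha}$-open neighborhood of $(g_1,\dots,g_{n+2},\gamma_0)$ over which Hypotheses \ref{2 sol}--\ref{hy:full rank} continue to hold throughout $X$; projecting onto the boundary-condition factor yields the claimed non-empty open subset of $[H^{1/2}(\partial X)]^{n+2}$. Pulling back through the initial change of variables recovers the statement for arbitrary constant $\gamma_0$.
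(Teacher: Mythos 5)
Your overall strategy is the paper's: take $u_i=x_i$ for the first $n$ solutions, take two quadratic harmonic polynomials for $u_{n+1},u_{n+2}$ so that the $Z_k$ are constant trace-free symmetric matrices, reduce the general constant $\gamma_0$ to the identity by a linear change of variables (note the direction should be $y=\gamma_0^{-1/2}x$, not $y=\gamma_0^{1/2}x$, so that $\gamma_0:\nabla^2u$ becomes $\Delta v$ -- a harmless slip), and conclude openness via Proposition \ref{prop:perturb}. Your reformulation of Hypothesis \ref{hyp W} is a nice improvement in clarity: with $H=I$ and $Z_k=M_k$ symmetric, $(M_k\Omega)^{sym}=\tfrac12[M_k,\Omega]$, so $\W^\perp$ is exactly the joint symmetric centralizer of $M_1,M_2$, and your choice ($M_1$ with distinct eigenvalues, $M_2$ with nonvanishing off-diagonal entries in the $M_1$-eigenbasis) forces $\W^\perp=\Rm I$. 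This is cleaner than the paper's entry-by-entry verification and is correct.

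The genuine gap is Hypothesis \ref{hy:full rank}.B for $n\ge 3$. You verify it only for $n=2$ (where your computation $S=2I$ is correct) and for general $n$ you assert that one can ``exhibit suitable $\Omega_1,\Omega_2$'' because ``the obstruction is purely algebraic.'' But this is precisely the nontrivial content of the proposition: it is not clear that for \emph{your} choice of $M_1$ (diagonal, distinct eigenvalues) and $M_2$ (dense off-diagonal, which moreover must be checked to be invertible since $Z_2^\star=Z_2^{-T}$ enters \eqref{eq:S}) the linear image $\{(M_2^{-1}M_1\Omega_1+M_1\Omega_2)^{sym}\}$ meets the positive cone. The paper does not prove this for an arbitrary admissible pair; it engineers \emph{specific} pairs dimension by dimension: for $n=2m$ it pairs coordinates $(x_{2i-1},x_{2i})$ so that $S=\Imm_n$ exactly; for $n=3$ it takes $Z_1'=2(\bfe_1\odot\bfe_2+\bfe_2\odot\bfe_3)$ and a diagonal $(Z_2')^\star$ with parameters $(t_1,t_2,t_3)$ that must simultaneously satisfy a positivity condition on a $3\times 3$ matrix and the trace-free constraint $t_1=-t_2t_3/(t_2+t_3)$ (satisfied e.g.\ by $(6,-2,3)$); and for $n=2m+3$ it glues these two blocks. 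The interplay between the harmonicity (trace-free) constraint and positive-definiteness of $S$ is exactly where the work lies, and your proposal leaves it unresolved. A secondary point: the statement asks for a single $(n+2)$-tuple fulfilling all of Hypotheses \ref{2 sol}--\ref{hy:full rank}, so whatever pair $(M_1,M_2)$ you settle on for \ref{hy:full rank}.B must also be re-checked against your centralizer criterion for \ref{hyp W}; this is easy but should be stated.
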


The second test case regards isotropic smooth tensors of the form $\gamma = \beta \Imm_n$, where we show that the scalar coefficient $\beta$ can be reconstructed globally by using the real and imaginary parts of the same complex geometrical optics (CGO) solution. The use of CGOs for fulfilling internal conditions was previously used in \cite{Bal2011a,Bal2012,Monard2011a}.

\begin{proposition}\label{prop:isotropic}
    For an isotropic tensor $\gamma = \beta \Imm_n$ with $\beta\in H^{\frac{n}{2}+3+\varepsilon} (X)$ for some $\varepsilon>0$, there exists a non-empty $\C^{2,\alpha}$-open subset of $[H^{\frac{1}{2}}(\partial X)]^2$ fulfilling Hypothesis \ref{2 sol} thoughout $X$.    
\end{proposition}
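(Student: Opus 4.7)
The natural strategy, already suggested by the statement, is to construct a single complex-valued solution of the conductivity equation $\nabla\cdot(\beta\nabla u)=0$ by complex geometrical optics (CGO) and then take $(u_1,u_2):=(\mathrm{Re}\,u_\rho,\mathrm{Im}\,u_\rho)$. The Liouville substitution $v=\sqrt\beta\,u$ reduces the equation to the Schr\"odinger equation $(-\Delta+q)v=0$ with potential $q=\Delta\sqrt\beta/\sqrt\beta$. Under the hypothesis $\beta\in H^{\frac{n}{2}+3+\varepsilon}(X)$ with $\beta\ge c>0$, standard Sobolev calculus gives $q\in H^{\frac{n}{2}+1+\varepsilon}(X)\hookrightarrow L^\infty(X)$. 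For $\rho\in\Cm^n$ with $\rho\cdot\rho=0$ and $|\rho|$ large, the Sylvester--Uhlmann construction yields CGO solutions $v_\rho=e^{\rho\cdot x}(1+\psi_\rho)$ on $X$ with $\|\psi_\rho\|_{H^{\frac{n}{2}+1+\varepsilon}(X)}\le C|\rho|^{-1}$, and hence $\|\psi_\rho\|_{W^{1,\infty}(X)}\le C|\rho|^{-1}$ via Sobolev embedding.

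\emph{Leading-order expansion of $\F_1$.} Setting $u_\rho:=v_\rho/\sqrt\beta$, direct differentiation gives
\[
 \nabla u_\rho=\frac{e^{\rho\cdot x}}{\sqrt\beta}\bigl[\rho+R_\rho(x)\bigr],\qquad R_\rho=\rho\,\psi_\rho+\nabla\psi_\rho-\tfrac12(1+\psi_\rho)\nabla\log\beta,
\]
with $\|R_\rho\|_{L^\infty(X)}=o(|\rho|)$ as $|\rho|\to\infty$. Picking $\rho=s(k+ik^\perp)$ for unit orthogonal vectors $k,k^\perp\in\Rm^n$ and expanding $\rho\,e^{\rho\cdot x}$ into real and imaginary parts, one checks that $\mathrm{Re}(\rho\,e^{\rho\cdot x})$ and $\mathrm{Im}(\rho\,e^{\rho\cdot x})$ are orthogonal, each of length $s\,e^{sk\cdot x}$. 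Hence the leading order of $\F_1(u_1,u_2)=|\nabla u_1|^2|\nabla u_2|^2-(\nabla u_1\cdot\nabla u_2)^2$ is $\beta^{-2}s^4 e^{4sk\cdot x}$, with error uniformly of order $s^3 e^{4sk\cdot x}$ on $X$. Since $X$ is bounded, $e^{4sk\cdot x}\ge e^{4s\min_X k\cdot x}>0$, so picking $s$ large gives a uniform lower bound $\F_1\ge c_0>0$ on all of $X$.

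\emph{Openness and main obstacle.} Let $g_i:=u_i|_{\partial X}$ for $i=1,2$; the regularity $\beta\in H^{\frac{n}{2}+3+\varepsilon}\hookrightarrow C^3(X)$ together with classical elliptic regularity puts $u_i\in C^{2,\alpha}(\overline X)$, so $g_i\in C^{2,\alpha}(\partial X)\subset H^{\frac{1}{2}}(\partial X)$. Applying Proposition \ref{prop:perturb} to Hypothesis \ref{2 sol} with $X_0=X$, a sufficiently small $\C^{2,\alpha}(\partial X)^2$-neighborhood of $(g_1,g_2)$ still fulfills Hypothesis \ref{2 sol} throughout $X$, giving the required non-empty open set. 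The delicate step is the error control: one needs $\psi_\rho$ \emph{and} $\nabla\psi_\rho$ simultaneously of size $O(|\rho|^{-1})$ in $L^\infty$ so that the remainder $R_\rho$ in $\nabla u_\rho$ is genuinely subdominant with respect to $\rho$; the assumption $\beta\in H^{\frac{n}{2}+3+\varepsilon}$ is precisely dimensioned to land $q$ in a Sobolev space with enough smoothness for the CGO corrector estimate to survive Sobolev embedding into $W^{1,\infty}$ with the correct decay in $|\rho|$.
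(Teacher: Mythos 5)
Your proposal is correct and follows essentially the same route as the paper: construct a single CGO solution $u_{\bmrho}=\beta^{-1/2}e^{\bmrho\cdot x}(1+\psi_{\bmrho})$ with $\bmrho=\rho(\bk+i\bk^\perp)$, use the $\C^1$ smallness of the corrector (guaranteed by $\beta\in H^{\frac{n}{2}+3+\varepsilon}$) to show that the real and imaginary parts of $\rho e^{\bmrho\cdot x}$ dominate the gradients and are mutually orthogonal of equal length, and conclude $\F_1\ge c_0>0$ on $X$ for $\rho$ large. Your leading-order constant $\beta^{-2}\rho^4e^{4\rho\bk\cdot x}$ is in fact the correct one, and your explicit invocation of Proposition \ref{prop:perturb} for the $\C^{2,\alpha}$-openness makes precise a step the paper leaves implicit.
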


Thanks to Proposition \ref{prop:perturb}, we can also formulate the following without proof. 
\begin{corollary}\label{cor:perturb}
    Suppose $\gamma$ is a tensor as in either Proposition \ref{prop:identity} or \ref{prop:isotropic}. Then, for any $0<\alpha< 1$, there exists a $\C^{1,\alpha}$-neighborhood of $\gamma$ for which the conclusion of the same proposition remains valid. 
\end{corollary}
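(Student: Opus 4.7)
The plan is to show that Corollary \ref{cor:perturb} is an immediate consequence of Proposition \ref{prop:perturb} applied at the base tensors of Propositions \ref{prop:identity} and \ref{prop:isotropic}, combined with the product structure of the perturbation neighborhood. No new analysis of solutions to \eqref{eq:conductivity} is required; the corollary is essentially a topological repackaging of the stability-under-perturbation statement.

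More precisely, fix $\gamma_0$ as in Proposition \ref{prop:identity} (respectively Proposition \ref{prop:isotropic}), set $m = n+2$ (resp.\ $m=2$), and pick a specific tuple $(g_1, \ldots, g_m) \in [H^{\frac{1}{2}}(\partial X)]^m$ whose existence is guaranteed by that proposition and which fulfills Hypotheses \ref{2 sol}--\ref{hy:full rank} (resp.\ just Hypothesis \ref{2 sol}) throughout $X$ for $\gamma_0$. We may in fact take these $g_j$ in $\C^{2,\alpha}(\partial X)$ since the open set in the proposition is non-empty for the $\C^{2,\alpha}$ topology and $\C^{2,\alpha}(\partial X)$ is dense in the relevant neighborhood, so the intersection is non-empty.

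Next I would invoke Proposition \ref{prop:perturb} at the point $(g_1,\ldots,g_m,\gamma_0)$: it provides an open neighborhood $\mathcal{O} \subset \C^{2,\alpha}(\partial X)^m \times \C^{1,\alpha}(X)$ on which the same hypotheses remain valid over $X_0 = X$ (and, in the second case of Hyp.\ \ref{hy:full rank}.B, even with the same anti-symmetric matrix-valued functions $\Omega_1,\Omega_2$). Since $\mathcal{O}$ is open in the product topology, I can shrink it to a rectangular sub-neighborhood $\mathcal{U} \times \mathcal{V}$ with $\mathcal{U}$ an open neighborhood of $(g_1,\ldots,g_m)$ in $\C^{2,\alpha}(\partial X)^m$ and $\mathcal{V}$ an open $\C^{1,\alpha}$-neighborhood of $\gamma_0$ satisfying $\mathcal{U}\times\mathcal{V} \subset \mathcal{O}$.

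The corollary now follows: for every $\gamma \in \mathcal{V}$, the set $\mathcal{U}$ is a non-empty $\C^{2,\alpha}$-open subset of $[H^{\frac{1}{2}}(\partial X)]^m$ of boundary conditions for which the hypotheses of the relevant proposition hold throughout $X$ — which is exactly the conclusion of Proposition \ref{prop:identity} (resp.\ \ref{prop:isotropic}) for the perturbed tensor $\gamma$. There is no genuine obstacle in this argument; the only mild subtlety is to verify that the base boundary data can be chosen in $\C^{2,\alpha}(\partial X)$ rather than just in $H^{\frac{1}{2}}(\partial X)$, which is ensured by the fact that the open sets in both propositions are explicitly described in the $\C^{2,\alpha}$ topology.
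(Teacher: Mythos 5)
Your argument is correct and is precisely the route the paper intends: the corollary is stated ``without proof'' as a consequence of Proposition \ref{prop:perturb}, and your proposal simply fills in the standard details (choosing a base tuple of $\C^{2,\alpha}$ boundary conditions from the explicit constructions, applying Proposition \ref{prop:perturb}, and shrinking the resulting open set to a product neighborhood $\mathcal{U}\times\mathcal{V}$). The observation that the base data can be taken in $\C^{2,\alpha}(\partial X)$ is the right point to check, and it holds since the solutions constructed in Propositions \ref{prop:identity} and \ref{prop:isotropic} are smooth.
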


\paragraph{Push-forwards by diffeomorphisms}

Recall that for $\Psi:X\to \Psi(X)$ a $W^{1,2}$-diffeomorphism and $\gamma\in \Sigma(X)$, we define $\Psi_\star \gamma$ the conductivity tensor push-forwarded by $\Psi$ from $\gamma$ defined over $\Psi(X)$, by
\begin{align}
    \Psi_\star \gamma := (|J_\Psi|^{-1} D\Psi\cdot\gamma\cdot D\Psi)\circ \Psi^{-1}, \qquad J_\Psi := \det D\Psi.
    \label{eq:pushfwd}
\end{align}
We now show that, whenever a tensor is being push-forwarded from another by a diffeormorphism, then the local or global reconstructibility of one is equivalent to that of the other, in the sense of the Proposition below. While the existence of $\Psi_\star \gamma$ in $\Sigma (\Psi(X))$ merely requires that $\Psi$ be a $W^{1,2}$-diffeomorphism, our results below will require that $\Psi$ be smoother and that it satisfies the following uniform condition over $X$
\begin{align}
    C_\Psi^{-1} \le |J_\Psi| \le C_\Psi \quad \text{for some } C_\Psi \ge 1. 
    \label{eq:psiOK}
\end{align}

\begin{proposition}\label{prop:pushfwd}
    Assume that Hypothesis \ref{2 sol}, \ref{main hypo}, \ref{hyp W} or \ref{hy:full rank} holds over some $X_0\subseteq X$ for a given number $m$ of solutions of \eqref{eq:conductivity} with boundary conditions $g_1,\dots,g_m$. For $\Psi:X\to\Psi(X)$ a smooth diffeomorphism satisfying \eqref{eq:psiOK}, the same hypothesis holds true over $\Psi(X_0)$ for the conductivity tensor $\Psi_\star \gamma$ with boundary conditions $(g_1\circ\Psi^{-1}, \dots, g_m\circ\Psi^{-1})$. In the case of Hyp. \ref{hy:full rank}.B, it holds with the following $A_n(\Rm)$-valued functions defined over $\Psi(X)$:
    \begin{align}
	\Psi_\star \Omega_1 := [D\Psi\cdot \Omega_1\cdot D\Psi^t]\circ \Psi^{-1} \qandq \Psi_\star \Omega_2 :=  [|J_\Psi| D\Psi\cdot \Omega_2\cdot D\Psi^t] \circ\Psi^{-1}.  
	\label{eq:psiomega}
    \end{align}
\end{proposition}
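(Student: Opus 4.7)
The plan is to exploit the covariance of the conductivity equation: if $u$ solves $\nabla\cdot(\gamma\nabla u)=0$ on $X$ with $u|_{\partial X}=g$, then $v := u\circ \Psi^{-1}$ solves $\nabla\cdot((\Psi_\star\gamma)\nabla v)=0$ on $\Psi(X)$ with $v|_{\partial \Psi(X)}=g\circ\Psi^{-1}$. Writing $A := D\Psi^{-T}$, a direct computation gives the transformation rules
\begin{align*}
    \nabla v = (A\nabla u)\circ \Psi^{-1}, \qquad H^{new} = (|J_\Psi|^{-1}\,D\Psi\, H)\circ\Psi^{-1} = (|J_\Psi|^{-1} A^{-T} H)\circ\Psi^{-1}.
\end{align*}
From \eqref{eq:psiOK} and smoothness of $\Psi$, both $A$ and $A^{-1}$ are uniformly bounded, and $|J_\Psi|^{\pm 1}$ is bounded. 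Hypotheses \ref{2 sol}, \ref{main hypo} and \ref{hy:full rank}.A are then immediate: $\F_1^{new}$ is the Gram determinant of $A\nabla u_1, A\nabla u_2$ and satisfies $\F_1^{new}\circ\Psi \geq \lambda_{min}(A^TA)^2\,\F_1$ by the Loewner order inequality $V^TA^TAV \succeq \lambda_{min}(A^TA)\,V^TV$; the Jacobian identity gives $\det[\nabla V]=(|J_\Psi|^{-1}\det[\nabla U])\circ\Psi^{-1}$ and similarly $\det Z_1^{new}=(|J_\Psi|^{-1}\det Z_1)\circ\Psi^{-1}$, since the decomposition coefficients transform trivially as $\mu_k^{i,new}=\mu_k^i\circ\Psi^{-1}$, whence $Z_k^{new}=(A Z_k)\circ\Psi^{-1}$.

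The main subtlety is Hypothesis \ref{hyp W}, where one must show the codimension of $\W$ is preserved after pushforward. The idea is to reparameterize the antisymmetric multipliers. Setting $\tilde\Omega := A\,\Omega\, A^T$ (which is still antisymmetric and ranges over all of $A_n(\Rm)$ as $\Omega$ does, since $M\mapsto AMA^T$ is a linear automorphism of $A_n(\Rm)$), a short computation using $B^T = |J_\Psi|^{-1}A^{-1}$ yields
\begin{align*}
    (Z_k^{new} (H^{new})^T \tilde\Omega)^{sym} = \left(|J_\Psi|^{-1}\, A\, (Z_k H^T \Omega)^{sym}\, A^T\right)\circ\Psi^{-1}.
\end{align*}
Since the map $M\mapsto |J_\Psi|^{-1} A M A^T$ is a linear isomorphism of $S_n(\Rm)$ at each point, $\W^{new}(y)$ has the same dimension as $\W(\Psi^{-1}(y))$, hence the same codimension one in $S_n(\Rm)$.

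Finally, for Hypothesis \ref{hy:full rank}.B, the same idea dictates the choice of $\Omega_i^{new}$. Plugging the transformation rules into $S^{new}$ and using $(Z_1^{new})^T = (Z_1^T A^T)\circ\Psi^{-1}$ and $(Z_2^{new})^{\star}=(A^{-T}Z_2^\star)\circ\Psi^{-1}$, the choice $\Omega_1^{new}=(A^{-T}\Omega_1 A^{-1})\circ\Psi^{-1}$ and $\Omega_2^{new}=(|J_\Psi|\,A^{-T}\Omega_2 A^{-1})\circ\Psi^{-1}$—which are exactly $\Psi_\star\Omega_1$ and $\Psi_\star\Omega_2$ of \eqref{eq:psiomega}, and remain antisymmetric—gives after symmetrization
\begin{align*}
    S^{new} = (A^{-T} S\, A^{-1})\circ\Psi^{-1} = (D\Psi\, S\, D\Psi^T)\circ\Psi^{-1}.
\end{align*}
For any $\xi\in\Rm^n$, $\xi\cdot S^{new}\xi = (D\Psi^T\xi)\cdot S\cdot(D\Psi^T\xi)\circ\Psi^{-1}$, so uniform ellipticity of $S$ transfers to $S^{new}$ with a constant depending only on $\kappa$ and the uniform bounds on $D\Psi^{\pm 1}$. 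The hardest piece is identifying the correct reparameterizations in hypotheses \ref{hyp W} and \ref{hy:full rank}.B; once those are pinned down, the remaining estimates reduce to standard uniform bounds on $D\Psi$, $D\Psi^{-1}$, and $|J_\Psi|$.
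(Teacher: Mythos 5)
Your proposal is correct and follows essentially the same route as the paper: establish the transformation rules $\nabla v=(D\Psi^{-T}\nabla u)\circ\Psi^{-1}$, $H^{new}=(|J_\Psi|^{-1}D\Psi\,H)\circ\Psi^{-1}$, $Z_k^{new}=(D\Psi^{-T}Z_k)\circ\Psi^{-1}$, and then verify each hypothesis by observing that $\W$ and $S$ transform by pointwise congruence with the nonsingular matrix $D\Psi$ (your reparameterization $\Omega\mapsto A\Omega A^T$ of the antisymmetric multipliers is exactly how the paper absorbs the Jacobian factors, and you recover the same $\Psi_\star\Omega_i$ as in \eqref{eq:psiomega}). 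The only cosmetic differences are your use of $A=D\Psi^{-T}$ as shorthand and the quantitative Loewner-order bound for $\F_1$, where the paper merely invokes preservation of linear independence.
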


In contrast to inverse conductivity problems from boundary data, where the diffeomorphisms above are a well-known obstruction to injectivity, Proposition \ref{prop:pushfwd} precisely states the opposite: if a given tensor $\gamma$ is reconstructible in some sense, then so is $\Psi_\star \gamma$, and the boundary conditions making the inversion valid are explicitely given in terms of the ones that allow to reconstruct $\gamma$. 

\begin{corollary}\label{cor:pushfwd}
    Suppose $\gamma$ is a tensor as in either Proposition \ref{prop:identity} or \ref{prop:isotropic} and $\Psi:X\to \Psi(X)$ is a diffeomorphism satisfying \eqref{eq:psiOK}. Then the conclusion of the same proposition holds for the tensor $\Psi_\star \gamma$ over $\Psi(X)$ and  boundary conditions defined over $\partial (\Psi(X))$.     
\end{corollary}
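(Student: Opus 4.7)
The plan is to invoke Proposition \ref{prop:pushfwd} essentially as a black box, with the only extra ingredient being that pre-composition with the boundary trace of a smooth diffeomorphism preserves $\C^{2,\alpha}$-openness of sets of boundary conditions.

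First, I would fix a tensor $\gamma$ as in either Proposition \ref{prop:identity} or \ref{prop:isotropic}, and let $\mathcal{U}\subset [H^{1/2}(\partial X)]^m$ denote the non-empty $\C^{2,\alpha}$-open subset of boundary conditions $(g_1,\dots,g_m)$ (with $m=n+2$ or $m=2$ respectively) provided by that proposition, for which the relevant hypotheses among \ref{2 sol}--\ref{hy:full rank} hold on $X_0=X$. For the case of \ref{hy:full rank}.B (which is only needed in Proposition \ref{prop:identity}), I would also fix the accompanying $A_n(\Rm)$-valued functions $\Omega_1,\Omega_2$.

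Next, I would apply Proposition \ref{prop:pushfwd} pointwise over $\mathcal{U}$: for each $(g_1,\dots,g_m)\in \mathcal{U}$, the tuple $(g_1\circ \Psi^{-1},\dots,g_m\circ\Psi^{-1})$ makes the same hypothesis hold for $\Psi_\star\gamma$ throughout $\Psi(X_0)=\Psi(X)$, with the push-forwarded $A_n(\Rm)$-valued functions $\Psi_\star \Omega_1, \Psi_\star\Omega_2$ from \eqref{eq:psiomega} in the case of \ref{hy:full rank}.B. Thus the image set $\Psi_\partial^\star(\mathcal{U}):=\{(g_i\circ\Psi^{-1}|_{\partial\Psi(X)})_{i=1}^m \,:\, (g_i)\in\mathcal{U}\}$ consists entirely of boundary conditions for which $\Psi_\star\gamma$ satisfies the desired hypotheses.

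It remains to verify that $\Psi_\partial^\star(\mathcal{U})$ is a non-empty $\C^{2,\alpha}$-open subset of $[H^{1/2}(\partial\Psi(X))]^m$. Non-emptiness is immediate. For openness, I would argue that since $\Psi$ is a smooth diffeomorphism satisfying \eqref{eq:psiOK}, its boundary restriction $\Psi|_{\partial X}:\partial X\to \partial\Psi(X)$ is a $\C^{2,\alpha}$-diffeomorphism between the two boundaries, so the pull-back operator $g\mapsto g\circ \Psi^{-1}|_{\partial\Psi(X)}$ is a linear homeomorphism between $\C^{2,\alpha}(\partial X)$ and $\C^{2,\alpha}(\partial\Psi(X))$; applied componentwise, it maps the $\C^{2,\alpha}$-open set $\mathcal{U}$ to a $\C^{2,\alpha}$-open set. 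This concludes the argument.

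There is no real obstacle here: the corollary is a bookkeeping consequence of Proposition \ref{prop:pushfwd} together with the elementary transfer of $\C^{2,\alpha}$-openness under composition with a smooth boundary diffeomorphism, the latter being where the smoothness hypothesis on $\Psi$ and condition \eqref{eq:psiOK} are used.
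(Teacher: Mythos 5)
Your proposal is correct and is exactly the intended argument: the paper states Corollary \ref{cor:pushfwd} without proof precisely because it follows by applying Proposition \ref{prop:pushfwd} to each tuple in the open set supplied by Proposition \ref{prop:identity} or \ref{prop:isotropic}, plus the observation that composition with $\Psi^{-1}|_{\partial \Psi(X)}$ is a homeomorphism in the $\C^{2,\alpha}$ topology. Your added verification of the openness transfer is a worthwhile (if routine) detail that the paper leaves implicit.
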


\paragraph{Generic reconstructibility.}

We finally state that any $\C^{1,\alpha}$ smooth tensor is, in principle, reconstructible from current densities in the sense of the following proposition. This result uses the Runge approximation property, a property equivalent to the unique continuation principle, valid for Lipschitz-continuous tensors. 

\begin{proposition}\label{prop:Runge}
    Let $X\subset\Rm^n$ a $\C^{2,\alpha}$ domain and $\gamma\in \C^{1,\alpha}_\Sigma (X)$. Then for any $x_0\in X$, there exists a neighborhood $X_0\subset X$ of $x_0$ and $n+2$ solutions of \eqref{eq:conductivity} fulfilling hypotheses \ref{main hypo} and \ref{hyp W} over $X_0$. 
\end{proposition}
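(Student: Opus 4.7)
The plan is to build $n+2$ local solutions of \eqref{eq:conductivity} on a small neighborhood of $x_0$ that satisfy Hyp.\ \ref{main hypo} and \ref{hyp W}, and then to use the Runge approximation property to deform them into global solutions on $X$, with the hypotheses persisting on a slightly smaller neighborhood via Proposition \ref{prop:perturb}. The local construction itself relies on a coefficient-freezing argument that reduces matters to the constant-coefficient setting handled in Proposition \ref{prop:identity}.

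In detail, set $\gamma_0 := \gamma(x_0)$ and pick a ball $B = B_r(x_0) \Subset X$. The frozen operator $\nabla\cdot(\gamma_0 \nabla \cdot)$ is constant-coefficient elliptic, so Proposition \ref{prop:identity} (applied on $B$) yields $n+2$ polynomial solutions $v_1, \dots, v_{n+2}$ fulfilling Hypotheses \ref{main hypo} and \ref{hyp W} throughout $B$. For each $i$, I would then solve the variable-coefficient Dirichlet problem
\begin{equation*}
    \nabla\cdot(\gamma \nabla u_i^r)=0 \text{ in } B, \qquad u_i^r|_{\partial B} = v_i|_{\partial B},
\end{equation*}
which admits a unique $\C^{2,\alpha}(\overline B)$ solution since $\gamma \in \C^{1,\alpha}_\Sigma(X)$. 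After rescaling $y=(x-x_0)/r$, the functions $\tilde u_i^r(y):=u_i^r(x_0+ry)$ solve an elliptic equation whose leading coefficient $\gamma(x_0+ry)$ converges in $\C^{1,\alpha}(B_1)$ to the constant $\gamma_0$ as $r\downarrow 0$. Interior Schauder estimates then give $\tilde u_i^r \to \tilde v_i$ in $\C^{2,\alpha}(\overline{B_{1/2}})$, where $\tilde v_i$ is the rescaling of $v_i$. Since Hyp.\ \ref{main hypo} and \ref{hyp W} are pointwise rank conditions on quantities built from one and two derivatives of the $u_i$'s, they are open in the $\C^2$-topology of the solutions, hence the pair $(u_1^r,\dots,u_{n+2}^r)$ satisfies them throughout $B_{r/2}(x_0)$ for $r$ small enough.

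The last step is to promote these local variable-coefficient solutions to global ones. Since $\gamma$ is Lipschitz, the operator $\nabla\cdot(\gamma\nabla\cdot)$ enjoys the unique continuation property and therefore the Runge approximation property: for any $\varepsilon>0$ there exist $w_1,\dots,w_{n+2}$ solving \eqref{eq:conductivity} on $X$ with $\C^{2,\alpha}(\partial X)$ boundary data, such that $\|u_i^r - w_i\|_{L^2(B_{r/3})}<\varepsilon$ for each $i$. Interior Schauder estimates upgrade this to $\C^{2,\alpha}$-closeness on $B_{r/4}$, and Proposition \ref{prop:perturb} (used in the boundary-data slot of its topology, with $\gamma$ fixed) then implies that for $\varepsilon$ small enough, $(w_1,\dots,w_{n+2})$ fulfills Hyp.\ \ref{main hypo} and \ref{hyp W} on $X_0 := B_{r/4}(x_0)$, concluding the proof. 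The main subtlety lies in the coefficient-freezing step: one must check that the rank-maximality of $\W$ in Hyp.\ \ref{hyp W}, which involves second derivatives of the $u_i$, survives the passage from frozen to variable coefficients, and this is precisely where interior Schauder regularity together with the $\C^{1,\alpha}$-smoothness of $\gamma$ are needed.
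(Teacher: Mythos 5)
Your proposal is correct and follows essentially the same route as the paper: freeze coefficients at $x_0$ and use the explicit solutions of Proposition \ref{prop:identity}, solve the variable-coefficient Dirichlet problem on a small ball to get $\C^2$-close local solutions, then apply Runge approximation plus interior elliptic estimates to pass to global solutions, concluding by openness of Hypotheses \ref{main hypo} and \ref{hyp W} under $\C^2$-perturbations of the solutions. The only cosmetic difference is that the paper invokes the continuity of the functionals $\F_2$ and $\B$ directly rather than routing through Proposition \ref{prop:perturb} (which, as stated, concerns perturbations of boundary data on $\partial X$ rather than of locally defined solutions), but the underlying openness argument is identical.
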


%\paragraph{Conclusive statements.}
%As a conclusion, we formulate into a theorem what tensors are reconstructible from current densities. 

%\begin{theorem}\label{thm:conclu}
%    For a conductivity tensor $\gamma$ falling into one of the categories below, there exists a finite $N$ and a non-empty $\C^{2,\alpha}$-open $\O\subset H^{\frac{1}{2}}(\partial X)^N$ such that for any $(g_1,\dots g_N) \in \O$, the current densities $H_I:= \{H_i\}_{i=1}^N$ reconstruct $\gamma$ over $X$ with stability 
%    \begin{align*}
%	\|\beta-\beta'\|_{W^{2,\infty}(X)} + \|\tilde\gamma - \tilde\gamma'\|_{W^{1,\infty}} + \|\nabla\times \gamma^{-1} - \nabla\times \gamma^{'-1}\| \le C \|H_I - H'_I\|_{W^{2,\infty}}. 
%    \end{align*}
%    \begin{itemize}
%	\item[(i)] $\gamma$ is a constant tensor.
%	\item[(ii)] $\gamma$ is an isotropic tensor 
	    
%    \end{itemize}
    
%\end{theorem}

%%% OUTLINE

\paragraph{Outline:} The rest of the paper is structured as follows. Section \ref{sec:prelim} covers the preliminaries, including the proof of Proposition \ref{prop:perturb}. Section \ref{sec:algos} presents the derivations of the local reconstruction algorithms: Sec. \ref{sb tau} covers the local reconstruction of $\beta$ and proves Proposition \ref{tilde gamma known}; Sec. \ref{sb tilde} covers the local reconstruction of $\tilde\gamma$ and the proof of Proposition \ref{prop:gammatilde}; Sec. \ref{sec:stabimprov} justifies equation \eqref{tensor coefficient}; Sec. \ref{sec:elliptic} discusses the global reconstruction of $\gamma$ via an elliptic system, with a proof of Propositions \ref{th:couple system nl} and \ref{elliptic sys nl}. Finally, Section \ref{sec:recons} discusses the question of reconstructibility from current densities, with the proofs of Propositions \ref{prop:identity}, \ref{prop:isotropic}, \ref{prop:pushfwd} and \ref{prop:Runge}.

\section{Preliminaries}\label{sec:prelim}
In this section, we briefly recall elliptic regularity results, the mapping properties of the current density operator and we conclude with the proof of Proposition \ref{prop:perturb}.

\paragraph{Properties of the forward mapping.}  In the following, we will make use of the following result, based on Schauder estimates for elliptic equations. It is for instance stated in \cite{Isakov2006}. 
\begin{proposition}\label{prop:fwdmap}
    For $k\ge 2$ an integer and $0<\alpha<1$, if $X$ is a $\C^{k+1,\alpha}$-smooth domain, then the mapping $(g,\gamma)\mapsto u$, solution of \eqref{eq:conductivity}, is continuous in the functional setting 
    \begin{align*}
	\C^{k,\alpha}(\partial X) \times \C_\Sigma^{k-1,\alpha} (X) \to \C^{k,\alpha}(X).
    \end{align*}    
\end{proposition}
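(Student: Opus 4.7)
The plan is to deduce Proposition \ref{prop:fwdmap} from the standard interior and global Schauder estimates for second-order linear elliptic equations in divergence form, applied once to get bounds on the solution and a second time to the equation satisfied by differences.

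First, I would recall (or invoke) the following global Schauder estimate: for $\gamma\in \C_\Sigma^{k-1,\alpha}(X)$ and $X$ a $\C^{k+1,\alpha}$ domain, the solution $u\in \C^{k,\alpha}(X)$ of the Dirichlet problem $\nabla\cdot(\gamma\nabla u) = f$ in $X$, $u|_{\partial X} = g$, obeys the \emph{a priori} estimate
\begin{equation*}
    \|u\|_{\C^{k,\alpha}(X)} \le C\left( \|f\|_{\C^{k-2,\alpha}(X)} + \|g\|_{\C^{k,\alpha}(\partial X)} \right),
\end{equation*}
where $C$ depends only on $\kappa$, $\|\gamma\|_{\C^{k-1,\alpha}(X)}$ and $X$. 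This is the workhorse of the proof: once it is available, the rest of the argument is soft.

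Second, to get continuity of the map $(g,\gamma)\mapsto u$, I would fix a reference pair $(g_0,\gamma_0)\in \C^{k,\alpha}(\partial X)\times \C_\Sigma^{k-1,\alpha}(X)$ with corresponding solution $u_0$, take a nearby $(g,\gamma)$ with solution $u$, and study $w := u-u_0$, which satisfies
\begin{equation*}
    \nabla\cdot(\gamma \nabla w) = -\nabla\cdot((\gamma-\gamma_0)\nabla u_0) \quad \text{in } X, \qquad w|_{\partial X} = g - g_0.
\end{equation*}
Writing the right-hand side as $\nabla\cdot F$ with $F = -(\gamma-\gamma_0)\nabla u_0 \in \C^{k-1,\alpha}(X)$ and applying the Schauder estimate to $w$ (after possibly absorbing the divergence-form source by rewriting and using the $\C^{k-2,\alpha}$ bound on $\nabla\cdot F$, which is controlled by $\|\gamma-\gamma_0\|_{\C^{k-1,\alpha}}\|u_0\|_{\C^{k,\alpha}}$), one obtains
\begin{equation*}
    \|w\|_{\C^{k,\alpha}(X)} \le C\left( \|\gamma-\gamma_0\|_{\C^{k-1,\alpha}(X)} \|u_0\|_{\C^{k,\alpha}(X)} + \|g-g_0\|_{\C^{k,\alpha}(\partial X)} \right).
\end{equation*}
Since $\|u_0\|_{\C^{k,\alpha}}$ is controlled by the first Schauder estimate applied to $u_0$, continuity at the arbitrary base point $(g_0,\gamma_0)$ follows.

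The only technical obstacle is a bookkeeping one: ensuring that the constant in the Schauder estimate is uniform in a $\C^{k-1,\alpha}$-neighborhood of $\gamma_0$, i.e.\ that it does not blow up as $\gamma$ is perturbed slightly. This is handled by noting that both the ellipticity constant $\kappa$ and the norm $\|\gamma\|_{\C^{k-1,\alpha}}$ are continuous in $\gamma$ for the $\C^{k-1,\alpha}$ topology, so that any sufficiently small neighborhood preserves a uniform ellipticity constant and a uniform bound on the coefficient's norm, keeping $C$ uniformly bounded. With this observation, the argument above yields the claimed continuity and the proposition follows. (As noted in the paper, this is a classical fact and a complete treatment can be found in \cite{Isakov2006}.)
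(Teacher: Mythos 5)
Your argument is correct and follows exactly the route the paper intends: the paper does not actually prove this proposition but attributes it to Schauder estimates and cites \cite{Isakov2006}, and your two-step application of the global Schauder estimate (once for the a priori bound on $u_0$, once for the difference $w=u-u_0$ with divergence-form source $-\nabla\cdot((\gamma-\gamma_0)\nabla u_0)$ and boundary data $g-g_0$) is the standard way to turn that estimate into continuity of $(g,\gamma)\mapsto u$. The only detail worth recording is that the Schauder estimate as usually stated carries an extra $\|u\|_{C^0}$ term on the right-hand side, which is removed here by the maximum principle since the operator has no zeroth-order term; with that remark and your observation that the Schauder constant is uniform on a small $\C^{k-1,\alpha}$-neighborhood of $\gamma_0$, the argument is complete.
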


As a consequence, we can claim that, with the same $k,\alpha$ as above, the current density operator $(g,\gamma)\mapsto \gamma\nabla u$ is continuous in the functional setting 
\begin{align*}
    \C^{k,\alpha}(\partial X) \times \C_\Sigma^{k-1,\alpha} (X) \to \C^{k-1,\alpha}(X).
\end{align*}
Moreover, this fact allows us to prove Proposition \ref{prop:perturb}.

\begin{proof}[Proof of Proposition \ref{prop:perturb}.] Fixing some domain $X_0\subset X$ and using Proposition \ref{prop:fwdmap}, it is clear that the mappings
    \begin{align*}
	f_1 : (\C^{2,\alpha}(\partial X))^2\times \C^{1,\alpha}_\Sigma(X)\ni  (g_1,g_2,\gamma) &\mapsto \inf_{X_0} \F_1(u_1,u_2), \\
	f_2 : (\C^{2,\alpha}(\partial X))^n\times \C^{1,\alpha}_\Sigma(X)\ni  (g_1,\dots,g_n,\gamma) &\mapsto \inf_{X_0} \F_2(u_1,\dots,u_n),
    \end{align*}
    with $\F_1,\F_2$ defined in \eqref{eq:F1},\eqref{eq:F2}, are continuous, so $f_1^{-1}\big( (0,\infty) \big)$ and $f_2^{-1}\big( (0,\infty) \big)$ are open, which takes care of Hypotheses \ref{2 sol} and \ref{main hypo}. Further, Hypothesis \ref{hyp W} is fulfilled if and only if condition \ref{noln det} holds. Again, using Prop. \ref{prop:fwdmap}, the mapping $f_3:= \inf_{X_0} \B$ with $\B$ defined in \eqref{noln det} is a continuous function of $(g_1,\dots,g_{n+m},\gamma)\in (\C^{2,\alpha}(\partial X))^{n+m}\times \C^{1,\alpha}_\Sigma(X)$ so that $f_3^{-1}\big( (0,\infty) \big)$ is open.

    Along the same lines, Hypothesis \ref{hy:full rank}.A is stable under such perturbations because the mapping 
    \begin{align*}
	(\C^{2,\alpha}(\partial X))^{n+1} \times \C^{1,\alpha}_\Sigma(X)\ni  (g_1,\dots,g_{n+1},\gamma) &\mapsto \inf_{X} \det Z_1,
    \end{align*}
    is continuous whenever $u_1,\dots,u_n$ satisfy \eqref{eq:F2} over $X$. Finally, fixing two $A_n(\Rm)$-valued functions $\Omega_1(x)$ and $\Omega_2(x)$, Hypothesis \ref{hy:full rank}.B is fulfilled whenever 
    \begin{align}
	(g_1,\dots,g_{n+2},\gamma) \in \bigcap_{i=1}^n s_i^{-1} \big( (0,\infty) \big),
	\label{eq:openset}
    \end{align}
    where we have defined the functionals, for $1\le i\le n$
    \begin{align*}
	s_i :  (\C^{2,\alpha}(\partial X))^{n+2} \times \C^{1,\alpha}_\Sigma(X)\ni  (g_1, \dots, g_{n+2},\gamma) \mapsto \inf_X \det \{S_{pq}\}_{1\le p,q\le i}, 
    \end{align*}
    with $S = \{S_{p,q}\}_{1\le p,q\le n}$ defined as in \eqref{eq:S}. Such functionals are, again, continuous, in particular the set in the right-hand side of \eqref{eq:openset} is open. This concludes the proof. 
\end{proof}

\section{Reconstruction approaches}\label{sec:algos}

\subsection{Local reconstruction of $\beta$} \label{sb tau}

In this section, we assume that $\tilde\gamma$ is known and with $W^{1,\infty}$ components. Assuming Hypothesis \ref{2 sol} is fulfilled for two solutions $u_1,u_2$ over an open set $X_0\subset X$, we now prove equation \eqref{log beta}. 

\begin{proof}[Proof of equation \eqref{log beta}.]
    Rewriting \eqref{eq:meas} as $\frac{1}{\beta}\tilde{\gamma}^{-1}H_j=\nabla u_j$ and applying the operator $d(\cdot)$. Using identities \eqref{eq:d2zero} and \eqref{eq:dfV}, we arrive at the following equation for $\log\beta$:
    \begin{align} \label{2 form nl}
	\nabla\log\beta \wedge (\tilde{\gamma}^{-1}H_j) = d(\tilde{\gamma}^{-1}H_j), \quad j=1,2.
    \end{align}
    Let us first notice the following equality of vector fields
    \begin{align*}
	\nabla\log\beta\wedge (\tilde\gamma^{-1}) H_1 (\tilde\gamma H_1, \cdot) = (\nabla\log\beta\cdot \tilde\gamma H_1) (\tilde\gamma^{-1} H_1) - |H_1|^2 \nabla\log\beta, 
    \end{align*}
    so that 
    \begin{align*}
	\nabla\log\beta &= \frac{1}{|H_1|^2} (\nabla\log\beta\cdot \tilde\gamma H_1) \tilde\gamma^{-1} H_1 - \frac{1}{|H_1|^2} \nabla\log\beta\wedge (\tilde\gamma^{-1} H_1) (\tilde\gamma H_1, \cdot) \\
	&= \frac{1}{|H_1|^2} (\nabla\log\beta\cdot \tilde\gamma H_1) \tilde\gamma^{-1} H_1 - \frac{1}{|H_1|^2} d(\tilde\gamma^{-1} H_1) (\tilde\gamma H_1, \cdot).
    \end{align*}
    It remains thus to prove that 
    \begin{align*}
	(\nabla\log\beta\cdot \tilde\gamma H_1) =  \frac{1}{D} \left( |H_1|^2 d(\tilde\gamma^{-1}H_1) - (H_1\cdot H_2) d(\tilde\gamma^{-1}H_2) \right) (\tilde\gamma H_1, \tilde\gamma H_2),
    \end{align*}
    which may be checked directly by computing, for $j=1,2$ 
    \begin{align*}
	d(\tilde\gamma^{-1} H_j) (\tilde\gamma H_1, \tilde\gamma H_2) &= d\log\beta\wedge (\tilde\gamma^{-1} H_j)  (\tilde\gamma H_1, \tilde\gamma H_2) \\
	& = (\nabla\log\beta\cdot \tilde\gamma H_1 ) H_j\cdot H_2 - (\nabla\log\beta\cdot \tilde\gamma H_2 )(H_j\cdot H_1). 
    \end{align*}
    Taking the appropriate weighted sum of the above equations allows to extract $(\nabla\log\beta\cdot \tilde\gamma H_1)$, and hence \eqref{log beta}.
\end{proof}

\paragraph{Reconstruction procedures for $\beta$, uniqueness and stability.}

Suppose equation \eqref{log beta} holds over some convex set $X_0\subset X$ and fix $x_0\in X_0$. Equation \eqref{log beta} is a gradient equation $\nabla\log\beta = F$ with known right-hand side $F$. For any $x\in X_0$, one may thus construct $\beta(x)$ by integrating \eqref{log beta} over the segment $[x_0,x]$, leading to one possible formula
\begin{align}
    \beta(x) = \beta(x_0) \exp \left( \int_0^1 (x-x_0)\cdot F( (1-t)x_0 + tx)\ dt \right), \quad x\in X_0. 
    \label{eq:reconsbeta}
\end{align}

\begin{proof}[Proof of Proposition \ref{tilde gamma known}]
    Since $\det\tilde\gamma = 1$, the entries of $\tilde\gamma^{-1}$ are polynomials of the entries of $\tilde\gamma$, so that the entries of the right-hand side of \eqref{log beta} are polynomials of the entries of $H_1,H_2,\tilde\gamma$ and their derivatives, with bounded coefficients. It is thus straightforward to establish that 
    \begin{align}\label{estimate nabla beta}
	\|\nabla\log\beta-\nabla\log\beta'\|_{L^{\infty}(X_0)}\le C(\|H-H'\|_{W^{1,\infty}(X)}+\|\tilde\gamma-\tilde\gamma'\|_{W^{1,\infty}(X)})
    \end{align}
    for some constant $C$. Estimate \eqref{stability beta} then follows from the fact that 
    \begin{align*}
	\|\log\beta-\log\beta'\|_{L^\infty(X_0)} \le |\log\beta(x_0)-\log\beta'(x_0)| + \Delta (X) \|\nabla\log\beta-\nabla\log\beta'\|_{L^{\infty}(X_0)},
    \end{align*}    
    where $\Delta (X)$ denotes the diameter of $X$. 
\end{proof}

One could use another integration curve than the segment $[x_0,x]$ to compute $\beta(x)$. In order for this integration to not depend on the choice of curve, the right-hand side $F$ of \eqref{log beta} should satisfy the integrability condition $dF = 0$, a condition on the measurements which characterizes partially the range of the measurement operator. 

When measurements are noisy, said right-hand side may no longer satisfy this requirement, in which case the solution to \eqref{log beta} no longer exists. One way to remedy this issue is to solve the {\em normal} equation to \eqref{log beta} over $X_0$ (whose boundary can be made smooth) with, for instance, Neuman boundary conditions:
\begin{align*}
    -\Delta \log\beta = -\nabla\cdot F \quad (X_0), \qquad \partial_\nu \log\beta|_{\partial X_0} = F\cdot \nu,
\end{align*}
where $\nu$ denotes the outward unit normal to $X_0$. This approach salvages existence while projecting the data onto the range of the measurement operator, with a stability estimate similar to \eqref{stability beta} on the $H^s$ Sobolev scale instead of the $W^{s,\infty}$ one.

\subsection{Local reconstruction of $\tilde\gamma$} \label{sb tilde}

We now turn to the local reconstruction algorithm of $\tilde\gamma$. In this case, the reconstruction is algebraic, i.e. no longer involves integration of a gradient equation. In the sequel, we work with $n+m$ solutions of \eqref{eq:conductivity} denoted $\{u_i\}_{i=1}^{n+m}$, whose current densities $\{H_i = \gamma\nabla u_i\}_{i=1}^{n+m}$ are assumed to be measured. 

\paragraph{Derivation of the space of linear constraints \eqref{eq:W}.} Apply the operator $d(\gamma^{-1}\cdot)$ to the relation of linear dependence
\begin{align*}
    H_{n+k} = \mu_k^i H_i, \where\quad \mu_k^i:= - \frac{\det (H_1,\dots,\overbrace{H_{n+k}}^i, \dots, H_n)}{\det (H_1,\dots,H_n)},\quad 1\le i\le n.
\end{align*}
Using the fact that $d(\gamma^{-1}H_i)=d (\nabla u_i)=0$, we arrive at the following relation,
\begin{align*}
    Z_{k,i}\wedge \tilde\gamma^{-1}H_i=0, \where\quad Z_{k,i}:=\nabla\mu_k^i, \quad k=1,2, \ldots
\end{align*}
Since the 2-form vanishes, by applying two vector fields $\tilde{\gamma}\bfe_p$, $\tilde{\gamma}\bfe_p$, $1\le p< q\le n$, we obtain,
\begin{align*}
    H_{qi}Z_{k,i}\cdot\tilde{\gamma}\bfe_p=H_{pi}Z_{k,i}\cdot\tilde{\gamma}\bfe_q,
\end{align*}
Notice that the above equation means $(\tilde\gamma Z_k)_{pi}H_{qi}=(\tilde\gamma Z_k)_{qi}H_{pi}$, which amounts to the fact that $\tilde\gamma Z_k H^T$ is symmetric. This means in particular that $\tilde\gamma Z_k H^T$ is orthogonal to $A_n(\Rm)$, and for any $\Omega\in A_n(\Rm)$, we can rewrite this orthogonality condition as
\begin{align}\label{symmetry nl}
    0 = \tr (\tilde\gamma Z_k H^T \Omega) = \tr (\tilde\gamma^T Z_k H^T \Omega) = \tilde\gamma:Z_kH^T\Omega = \tilde\gamma:(Z_kH^T\Omega)^{sym},
\end{align}
where the last part comes from the fact that $\tilde\gamma$ is itself symmetric. Each matrix $Z_k$ thus generates a subspace of $S_n(\Rm)$ of linear contraints for $\tilde\gamma$. Considering $m$ additional solutions, we arrive at the space of constraints defined in \eqref{eq:W}.

\paragraph{Algebraic inversion of $\tilde\gamma$ via cross-product.} We now show how to reconstruct $\tilde\gamma$ explicitely at any point where the space $\W$ defined in \eqref{eq:W} has codimension one. We define the generalized cross product as follows. Over an $N$-dimensional space $\V$ with a basis $(\bfe_1,\cdots,\bfe_N)$, we define the alternating $N-1$-linear mapping $\N: \V^{N-1}\to \V$ as the formal vector-valued determinant below, to be expanded along the last row
\begin{align} \label{cross product}
    \N(V_1,\cdots,V_{N-1}):=\frac{1}{\det(\bfe_1,\cdots,\bfe_N)}\left|
    \begin{array}{ccc}
	\langle V_1,\bfe_1\rangle & \ldots &\langle V_1,\bfe_N\rangle \\
	\vdots      & \ddots  & \vdots\\
	\langle V_{N-1},\bfe_1\rangle & \ldots & \langle V_{N-1},\bfe_N\rangle\\
	\bfe_1 &\ldots &\bfe_N
    \end{array}
    \right|
\end{align}
$\N (V_1,\cdots,V_{N-1})$ is orthogonal to $V_1,\cdots,V_{N-1}$. Moreover, $\N(V_1,\cdots,V_{N-1})$ vanishes if and only if $(V_1,\cdots,V_{N-1})$ are linearly dependent.

With this notion of cross-product in the case $\V \equiv S_n(\Rm)$, we derive the following reconstruction algorithm for $\tilde\gamma$. Adding $m$ additional solutions, we find that $\W$ can be spanned by $\sharp\W := \frac{n(n-1)}{2} m$ matrices whose expressions are given in \eqref{eq:W}, picking for instance $\{\bfe_i\otimes \bfe_j - \bfe_j\otimes \bfe_i\}_{1\le i<j\le n}$ as a basis for $A_n(\Rm)$. The condition that $\W$ is of codimension one over $X_0$ can be formulated as:
\begin{align}
    \inf_{x\in X_0} \B(x) >c_1>0 , \qquad \B:= \sum_{I\in\sigma(n_S-1,\sharp\W)}|\det\N(I)|^{\frac{1}{n}},
    \label{noln det}
\end{align}
where $\sigma(n_S-1,\sharp\W)$ denotes the sets of increasing injections from $[1,n_S-1]$ to $[1,\sharp\W]$, and where we have defined $\N(I)=\N(M_{I_1},\cdots,M_{I_{n_S-1}})$, where $\N$ is defined by \eqref{cross product} with $\V \equiv S_n(\Rm)$. Then under condition \eqref{noln det}, $\W$ is of rank $n_S-1$ in $S_n(\Rm)$. 

Whenever $(M_1,\dots,M_{n_S-1})$ are picked in $\W$, their cross-product must be proportional to $\tilde \gamma$. The constant of proportionality can be deduced, up to sign, from the condition $\det\tilde\gamma=1$ so we arrive at $\pm|\det \N(M_1,\cdots,M_{n_S-1})|^{\frac{1}{n}}\tilde{\gamma}=\N(M_1,\cdots,M_{n_S-1})$. The sign ambiguity is removed by ensuring that $\tilde\gamma$ must be symmetric definite positive, in particular its first coefficient on the diagonal should be positive. As a conclusion, we obtain the relation
\begin{align} 
    |\det\N(I)|^{\frac{1}{n}}\tilde\gamma=\text{sign}(\N_{11}(I))\N(I), \quad I \in \sigma (n_S-1, \sharp \W).
    \label{gamma tilde}
\end{align}
This relation is nontrivial (and allows to reconstruct $\tilde\gamma$) only if $(M_1,\dots,M_{n_S-1})$ are linearly independent. When $\text{codim } \W =1$ but $\sharp \W > n_S-1$, we do not know {\it a priori} which $n_S-1$ -subfamily of $\W$ has maximal rank, so we sum over all possibilities. Equation \eqref{gamma tilde} then becomes
\begin{align}
    \sum_{I\in\sigma(n_S-1,\sharp\W)}\text{sign}(\N_{11}(I))\N(I)=\B\tilde\gamma,
    \label{noln algebra}
\end{align}
with $\B$ defined in \eqref{noln det}. Since $\B>c_1>0$ over $X_0$, $\tilde\gamma$ can be algebraically reconstructed on $X_0$ by formula \eqref{noln algebra}, where $\N$ is defined by \eqref{cross product} with $\V=S_n(\Rm)$.

\paragraph{Uniqueness and stability.}

Formula \eqref{noln algebra} has no ambiguity provided condition \eqref{noln det}, hence the uniqueness. Regarding stability, we briefly justify Proposition \ref{prop:gammatilde}.

\begin{proof}[Proof of Proposition \ref{prop:gammatilde}.] In formula \eqref{noln algebra}, the components of the cross-products $\N(I)$ are smooth (polynomial) functions of the components of the matrices $Z_k H$, which in turn are smooth functions of the components of $\{H_i\}_{i=1}^{n+m}$ and their first derivatives, and where the only term appearing as denominator is $\det (H_1,\dots,H_n)$, which is bounded away from zero by virtue of Hypothesis \ref{main hypo}. Thus \eqref{eq:stabgammatilde} holds for $p=0$. That it holds for any $p\ge 1$ is obtained by taking partial derivatives of the reconstruction formula of order $p$ and bounding accordingly.  
\end{proof}

\subsection{Joint reconstruction of $(\tilde\gamma,\beta)$ and stability improvement}\label{sec:stabimprov}
In this section, we justify equation \eqref{tensor coefficient}, which allows to justify the stability claim \eqref{eq:jointstab2}. Starting from $n$ solutions satisfying Hypothesis \ref{main hypo} over $X_0\subseteq X$ and denote $H = \{H_{ij}\}_{i,j=1}^n = [H_1|\dots|H_n]$ as well as $H^{pq} := (H^{-1})_{pq}$. Applying the operator $d (\gamma^{-1}\cdot)$ to both sides of \eqref{eq:meas} yields $d(\gamma^{-1}H_j)= d(\nabla u_j) = 0$ due to \eqref{eq:d2zero}. Rewritten in scalar components for $1\le j\le n$ and $1\le p< q\le n$
\begin{align*}
    0 = \partial_q (\gamma^{pl} H_{lj} ) - \partial_p (\gamma^{ql} H_{lj}) = (\partial_q \gamma^{pl}-\partial_p\gamma^{ql} )H_{lj}+\gamma^{pl}\partial_q H_{lj}-\gamma^{ql}\partial_p H_{lj}.
\end{align*}
Thus \eqref{tensor coefficient} is obtained after multiplying the last right-hand side by $H^{ji}$, summing over $j$ and using the property that $\sum_{j=1}^n H_{lj}H^{ji} = \delta_{il}$. 

\subsection{Reconstruction of $\gamma$ via an elliptic system} \label{sec:elliptic}
In this section, we will construct a second order system for $(u_1,\cdots,u_n)$ with $n+2$ measurements, assuming Hypotheses \ref{main hypo} and \ref{hy:full rank}.A hold with $X_0=X$. For the proof below, we shall recall the definition of the Lie Bracket of two vector fields in the euclidean setting:
\begin{align*}
    \left[X,Y\right] := (X\cdot\nabla)Y-(Y\cdot\nabla)X = (X^i \partial_i) Y^j \bfe_j - (Y^i \partial_i) X^j \bfe_j.
\end{align*}

\begin{proof}[Proof of Proposition \ref{th:couple system nl}]
    As is shown by \eqref{symmetry nl}, $\gamma Z_k H^T$ is symmetric. Multiplying both sides by $\gamma^{-1}$ and using $\gamma^{-1}H=\nabla U$, we see that $Z_k[\nabla U]^T$ is symmetric. More explicitly, we have 
    \begin{align}\label{Z U nl}
	Z_{k,pi}\partial_q u_i=Z_{k,qi}\partial_p u_i, \quad k=1,2,
    \end{align}
    or simply $Z_k[\nabla U]^T=[\nabla U]Z_k^T$. Assume Hypothesis \ref{hy:full rank}.A holds with $Z_2$ invertible so that $(Z_{2,1},\cdots,Z_{2,n})$ form a basis in $\Rm^n$. We define its dual frame such that $Z_{2,j}^{\star}\cdot Z_{2,i}=\delta _{ij}$. Denote $Z_2^{\star}=[Z_{2,1}^{\star},\cdots,Z_{2,n}^{\star}]$ and $Z_2^{\star}=Z_2^{-T}$. Then the symmetry of $Z_2[\nabla U]^T$ reads,
    \begin{align}\label{Y star nl}
	Z_{2,j}^\star\cdot\nabla u_i = Z_{2,i}^\star\cdot\nabla u_j, \quad 1\le i\le j\le n.
    \end{align}
    Pick $v$ a scalar function, we have the following commutation relation:
    \begin{align*}
	(X\cdot\nabla)(Y\cdot\nabla)v=(Y\cdot\nabla)(X\cdot\nabla)v+\left[X,Y\right]\cdot\nabla v
    \end{align*}
    Rewrite $Z_{1,pi}\partial_q=Z_{1,pi}\bfe_q\cdot\nabla$ and apply $Z_{2,j}^\star\cdot\nabla$ to both sides of \eqref{Z U nl}, we have the following equation by the above relations in Lie Bracket,
    \begin{align}\label{Y star Z}
	\left[Z_{2,j}^\star,Z_{1,pi}\bfe_q\right]\cdot \nabla u_i+(Z_{1,pi}\bfe_q\cdot\nabla)(Z_{2,j}^\star\cdot\nabla)u_i=\left[Z_{2,j}^\star,Z_{1,qi}\bfe_p\right]\cdot \nabla u_i+(Z_{1,qi}\bfe_p\cdot\nabla)(Z_{2,j}^\star\cdot\nabla)u_i
    \end{align}
    where $Z_{k,ij}=Z_k:\bfe_i\otimes\bfe_j$. Plugging \eqref{Y star nl} to the above equation gives,
    \begin{align*}
	(Z_{1,pi}\bfe_q\cdot\nabla)(Z_{2,i}^\star\cdot\nabla)u_j+\left[Z_{2,j}^\star,Z_{1,pi}\bfe_q\right]\cdot \nabla u_i=(Z_{1,qi}\bfe_p\cdot\nabla)(Z_{2,i}^\star\cdot\nabla)u_j+\left[Z_{2,j}^\star,Z_{1,qi}\bfe_p\right]\cdot \nabla u_i
    \end{align*}
    Looking at the principal part, the first term of the LHS reads
    \begin{align*}
	(Z_{1,pi}\bfe_q\cdot\nabla)(Z_{2,i}^\star\cdot\nabla)u_j=(Z_2^\star Z_1^T \bfe_p\otimes \bfe_q):\nabla^2u_j+(Z_{1,pi}\bfe_q\cdot\nabla)Z_{2,i}^\star\cdot \nabla u_j.
    \end{align*}
    Therefore, \eqref{Y star Z} amounts to the following coupled system,
    \begin{align}\label{prop Z2}
	Z_2^\star Z_1^T(\bfe_p\otimes \bfe_q-\bfe_q\otimes \bfe_p):\nabla^2 u_j+v^{pq}_{ij}\cdot\nabla u_i=0,\quad u_j|_{\partial X}=g_j,  \quad 1\le p\le q\le n
    \end{align}
    where
    \begin{align}\label{prop vij}
	v^{pq}_{ij} &:=\delta_{ij}\left[(Z_{1,pl}\bfe_q-Z_{1,ql}\bfe_p)\cdot\nabla\right]Z_{2,l}^\star+\left[Z_{2,j}^\star,Z_{1,pi}\bfe_q-Z_{1,qi}\bfe_p\right]. 
    \end{align}
    Notice that $H=\gamma[\nabla U]$ implies that $H^{-T}[\nabla U]^T$ is symmetric. Compared with equation \eqref{Z U nl}, we can see that the same proof holds if we replace $Z_2$ by $H^{-T}$. In this case, the dual frame of $H^{-T}$ is simply $H$. So \eqref{prop Z2} and \eqref{prop vij} hold by replacing $Z_2^{\star}$ by $H$ and defining $\tilde v_{ij}^{pq}$ accordingly. 
\end{proof}

We now suppose that Hypothesis \ref{hy:full rank}.B is satisfied and proceed to the proof of Theorem \ref{elliptic sys nl}.  

\begin{proof} Starting from Hypothesis \ref{hy:full rank}.B with $A_n(\Rm)$-valued functions of the form
    \begin{align*}
	    \Omega_i (x) = \sum_{1\le p<q\le n} \omega^{i}_{pq}(x) ( \bfe_p\otimes \bfe_q-\bfe_q\otimes \bfe_q), \quad i=1,2,
    \end{align*}
    we take the weighted sum of equations \eqref{nl coupled} with weights $\omega_{pq}^1,\omega_{pq}^2$. The principal part becomes $S:\nabla^2 u_i$, which upon rewritting it as $\nabla\cdot(S\nabla u_i) - (\nabla\cdot S)\cdot \nabla u_i$ yields system \eqref{elliptic sys}.

    On to the proof of stability, pick another set of data $H'_I:=\{H'_i\}_{i=1}^{n+2}$ close enough to $H_I$ in $W^{1,\infty}$ norm, and write the corresponding system for $u'_1,\dots,u'_n$
    \begin{align} \label{elliptic nl 2}
	-\nabla\cdot S'\nabla u'_j+W'_{ij}\cdot\nabla u'_i = 0, \quad 1\le j\le n,
    \end{align}
    where $S'$ and $W'_{ij}$ are defined by replacing $H_I$ in \eqref{eq:Wij} by $H'_I$. Subtracting \eqref{elliptic nl 2} from \eqref{elliptic sys}, we have the following coupled elliptic system for $v_j=u_j-u'_j$:
    \begin{align}\label{w sys}
	-\nabla\cdot S\nabla v_j+W_{ij}\cdot\nabla v_i=\nabla\cdot (S-S')\nabla u'_j+(W'_{ij}-W_{ij})\cdot\nabla u'_i, \quad v_j|_{\partial X}=0
    \end{align}
    The proof is now a consequence of the Fredholm alternative (as in  \cite[Theorem 2.9]{Bal2012e}). We recast \eqref{w sys} as an integral equation. Denote the operator $L_0=-\nabla\cdot(S\nabla)$ and define $L_0^{-1}: H^{-1}(X) \ni f\mapsto v\in H_0^1(X)$, where $v$ is the unique solution to the equation
    \begin{align*}
	-\nabla\cdot(S\nabla v) = f \quad (X), \quad v|_{\partial X} = 0.
    \end{align*}
    By the Lax-Milgram theorem, we have $\|v\|_{H^1_0(X)}\le C \|f\|_{H^{-1}(X)}$, where $C$ only depends on $X$ and $S$. Thus $L_0^{-1}:H^{-1} (X)\to H_0^1(X)$ is continuous, and by Rellich imbedding, $L_0^{-1}: L^2(X) \to H_0^1(X)$ is compact. Define the vector space $\H = (H_0^1 (X))^n$, $\bfv = (v_1,\dots,v_n)$, $\bfh = (L_0^{-1}f_1,\dots,L_0^{-1}f_n)$, where $f_j=\nabla\cdot (S-S')\nabla u'_j+(W'_{ij}-W_{ij})\cdot\nabla u'_i$, and the operator $\bfP:\H\to\H$ by,
    \begin{align*}
	\bfP:\H\ni \bfv\to \bfP\bfv:=(L_0^{-1}(W_{i1}\cdot\nabla v_i),\cdots,L_0^{-1}(W_{in}\cdot\nabla v_i))\in \H.
    \end{align*}
    Since the $W_{ij}$ are bounded, the differential operators $W_{ij}\cdot\nabla:H^1_0\to L^2$ are continuous. Together with the fact that $L_0^{-1}:L^2\to H^1_0$ is compact, we get that $\bfP:\H\to\H$ is compact. After applying the operator $L_0^{-1}$ to \eqref{elliptic sys}, the elliptic system is reduced to the following Fredholm equation:
    \begin{align*}
	(\bfI + \bfP)\bfv = \bfh.
    \end{align*}
    By the Fredholm alternative, if $-1$ is not an eigenvalue of $\bfP$, then $\bfI+\bfP$ is invertible and bounded $\|\bfv\|_{\H} \le \|(\bfI+\bfP)^{-1}\|_{\L(\H)} \|\bfh\|_{\H}$. Since $L_0^{-1}:H^{-1}(X)\to H^1_0(X)$ is continuous, $\bfh$ in $(H_0^1 (X))^n$ is bounded by $\bff=(f_1,\cdots,f_n)$ in $ (H^{-1} (X))^n$.
    \begin{align*}
	\|\bfh\|_{\H}\le \|L_0^{-1}\|_{\L(H^{-1},H_0^1)}\|\bff\|_{H^{-1}(X)}.
    \end{align*}
    Then we have the estimate, 
    \begin{align*}
	\|\bfv\|_{\H} \le \|(\bfI+\bfP)^{-1}\|_{\L(\H)}\|L_0^{-1}\|_{\L(H^{-1},H^1_0)} \|\bff\|_{H^{-1}(X)}
    \end{align*}
    Noting that $L_0^{-1}$ is continuous and the RHS of \eqref{w sys} is expressed by $H_I-H'_I$ and their derivatives up to second order, we have the stability estimate
    \begin{align*}
	\|\bfu-\bfu'\|_{H^1_0(X)}\leq C \|H_I-H'_I\|_{H^1(X)}
    \end{align*}
    where $C$ depends on $H_I$ but can be chosen uniform for $H_I$ and $H'_I$ sufficiently close. Then $\gamma$ is reconstructed by $\gamma = H [\nabla U]^{-1}$ and $\nabla\times\gamma^{-1}$ by \eqref{tensor coefficient}, with a stability of the form
    \begin{align*}
	\|\gamma-\gamma'\|_{L^2(X)} + \|\nabla \times(\gamma^{-1}-\gamma'^{-1}))\|_{L^2(X)} \leq C \|H_I-H'_I\|_{H^1(X)}.
    \end{align*}
\end{proof}

\section{What tensors are reconstructible ?}\label{sec:recons}

\subsection{Test cases}

\paragraph{Constant tensors.}

We first prove that Hypotheses \ref{2 sol}-\ref{hy:full rank} can be fulfilled with explicit constructions in the case of constant coefficients. 

\begin{proof}[Proof of Proposition \ref{prop:identity}]
    Hypotheses \ref{main hypo} is trivially satisfied throughout $X$ by choosing the collection of solutions $u_i(x) = x_i$ for $1\le i\le n$, then Hypothesis \ref{2 sol} is fulfilled by picking any two distinct solutions of the above family. \\
    {\bf Fulfilling Hypothesis \ref{hyp W}.} Let us pick 
    \begin{align}
	\begin{split}
	    u_i(x):&=x_i, \quad 1\le i\le n, \\
	    u_{n+1}(x):&= \frac{1}{2}x^T\gamma_0^{-\frac{1}{2}}\sum_{j=1}^n t_j(\bfe_j\otimes \bfe_j)\gamma_0^{-\frac{1}{2}}x, \quad \sum_{j=1}^n t_j=0, \quad t_p\neq t_q \quad \text{if} \quad p\neq q, \\
	    u_{n+2}(x):&= \frac{1}{2}x^T\gamma_0^{-\frac{1}{2}} \sum_{j=1}^{n-1}(\bfe_j\otimes \bfe_{j+1}+\bfe_{j+1}\otimes \bfe_{j})\gamma_0^{-\frac{1}{2}}x.	    
	\end{split}
	\label{eq:constantsol}	
    \end{align}
    In particular, $H = \gamma_0$ and $Z_i = \nabla^2 u_{n+i}$ for $i=1,2$, do not depend on $x$ and admit the expression
    \begin{align*}
	Z_1=\gamma_0^{-\frac{1}{2}}\sum_{j=1}^n t_j(\bfe_j\otimes \bfe_j)\gamma_0^{-\frac{1}{2}} \qandq 
	Z_2=\gamma_0^{-\frac{1}{2}}\sum_{j=1}^{n-1} (\bfe_j\otimes \bfe_{j+1}+\bfe_{j+1}\otimes \bfe_{j})\gamma_0^{-\frac{1}{2}}.
    \end{align*}
    We will show that the ($x$-independent) space 
    \begin{align*}
	\W = \text{span} \left\{ (Z_1 H^T \Omega)^{sym}, (Z_2 H^T \Omega)^{sym}, \Omega\in A_n(\Rm) \right\}
    \end{align*}
    has codimension one in $S_n(\Rm)$ by showing that $\W^\perp \subset \Rm \gamma_0$, the other inclusion $\supset$ being evident.
    
    Let $A\in S_n(\Rm)$ and suppose that $A\perp\W$, we aim to show that $A$ is proportional to $\gamma_0$. The symmetry of $A Z_1 H^T$ implies that $\sum_{j=1}^n t_j \bfe_j\otimes \bfe_j\gamma_0^{-\frac{1}{2}}A\gamma_0^{-\frac{1}{2}}$ is symmetric. Denote $B=\gamma_0^{-\frac{1}{2}}A\gamma_0^{-\frac{1}{2}}\in S_n(\Rm)$, we deduce that
    \begin{align*}
	t_iB_{ij}=t_jB_{ji}, \quad \text{for} \quad 1\le i,j\le n.
    \end{align*}
    Since $B$ is symmetric and $t_i\neq t_j$ if $i\neq j$, the above equation gives that $B_{ij}=0$ for $i\neq j$, thus $B$ is a diagonal matrix, i.e. $B = \sum_{i=1}^n B_{ii} \bfe_i\otimes\bfe_i$. The symmetry of  $AZ_2H^T$ implies that $\sum_{j=1}^{n-1}(\bfe_j\otimes \bfe_{j+1}+\bfe_{j+1}\otimes \bfe_{j})\gamma_0^{-\frac{1}{2}}A\gamma_0^{-\frac{1}{2}}$ is symmetric, which means that 
    \begin{align*}
	\sum_{\substack{1\le i\le n\\ 1\le j \le n-1}}B_{ii}(\bfe_j\otimes \bfe_{j+1}+\bfe_{j+1}\otimes \bfe_{j})(\bfe_{i}\otimes \bfe_{i})=\sum_{\substack{1\le i\le n\\ 1\le j \le n-1}}B_{ii}(\bfe_{i}\otimes \bfe_{i})(\bfe_j\otimes \bfe_{j+1}+\bfe_{j+1}\otimes \bfe_{j})
    \end{align*}
    Write the above equation explicitly, we get
    \begin{align*}
	\sum_{j=1}^{n-1}B_{j+1,j+1}\bfe_j\otimes \bfe_{j+1}+B_{jj}\bfe_{j+1}\otimes \bfe_{j}=\sum_{j=1}^{n-1}B_{jj}\bfe_j\otimes \bfe_{j+1}+B_{j+1,j+1}\bfe_{j+1}\otimes \bfe_{j}
    \end{align*}
    Which amounts to 
    \begin{align*}
	\sum_{j=1}^{n-1}(B_{j+1,j+1}-B_{jj})(\bfe_{j+1}\otimes \bfe_{j}-\bfe_{j+1}\otimes \bfe_{j})=0
    \end{align*}
    Notice that $\{\bfe_{j+1}\otimes \bfe_{j}-\bfe_{j+1}\otimes \bfe_{j}\}_{1\le j\le n-1}$ are linearly independent in $A_n(\Rm)$, so $B_{j+1,j+1}=B_{jj}$ for $1\le j\le n-1$, i.e. $B$ is proportional to the identity matrix. This means that $A$ must be proportional to $\gamma_0$ and thus $\W^\perp \subset \Rm \gamma_0$. Hypothesis \ref{hyp W} is fulfilled throughout $X$. \\

    {\bf Fulfilling Hypothesis \ref{hy:full rank} with $\gamma = \Imm_n$.} We split the proof according to dimension.
    \begin{description}
	\item[Even case $n = 2m$.] Suppose that $n=2m$, pick $u_i=x_i$ for $1\le i\le n$, $u_{n+1}=\sum_{i=1}^m x_{2i-1}x_{2i}$ and $u_{n+2}=\sum_{i=1}^m \frac{(x_{2i-1}^2-x_{2i}^2)}{2}$. Then simple calculations show that 
	    \begin{align*}
		Z_1=\sum_{i=1}^m (\bfe_{2i-1}\otimes \bfe_{2i}+\bfe_{2i}\otimes \bfe_{2i-1}) \qandq
		Z_2=\sum_{i=1}^m (\bfe_{2i-1}\otimes \bfe_{2i-1}-\bfe_{2i}\otimes \bfe_{2i}).
	    \end{align*}
	    We have $\det Z_1 = (-1)^m\ne 0$ so \ref{hy:full rank}.A is fulfilled. Let us choose
	    \begin{align*}
		\Omega_1 := \sum_{p=1}^m(\bfe_{2p}\otimes \bfe_{2p-1}-\bfe_{2p-1}\otimes \bfe_{2p}) \qandq \Omega_2 = 0,
	    \end{align*}
	    then direct calculations show that $S = (Z_2^{\star} Z_1^{T}\Omega_1 + HZ_1^T \Omega_2)^{sym} = \Imm_n$, which is clearly uniformly elliptic, hence \ref{hy:full rank}.B is fulfilled. 

	\item[Odd case $n=3$.] Pick $u'_i=x_i$ for $1\le i\le 3$, $u'_{3+1}=x_1x_2+x_2x_3$ and $u'_{3+2}=\frac{1}{2t_1}x^2_1+\frac{1}{2t_2}x^2_2+\frac{1}{2t_3}x^2_3$, where $t_1,t_2,t_3$ are to be chosen. In this case, $H'=\Imm_3$, $Z'_1=2(\bfe_{1}\odot \bfe_{2}+\bfe_{2}\odot \bfe_{3})$ and $(Z'_2)^{\star}=\sum_{i=1}^3t_i\bfe_{i}\otimes \bfe_{i}$ (note that $Z'_2$ fulfills \ref{hy:full rank}.A). Pick $\Omega'_1(x)=\bfe_{2}\otimes \bfe_{1}-\bfe_{1}\otimes \bfe_{2}$, $\Omega'_2(x)=\bfe_{2}\otimes \bfe_{3}-\bfe_{3}\otimes \bfe_{2}$, simply calculations show that,
	    \begin{align}\label{eq:odd matrix}
		S'=\left( (Z'_2)^{\star} (Z'_1)^{T} \Omega'_1(x)+H'(Z'_1)^{T} \Omega'_2(x) \right)^{sym} = 
		\left[\begin{array}{ccc}
			t_1 & 0 & \frac{t_3+1}{2}\\
			0 & -t_2-1 & 0 \\
			\frac{t_3+1}{2} & 0 & 1
		\end{array}\right].
	    \end{align}
	    $(t_1,t_2,t_3)$ must be such that $S'$ is positive definite and $\tr (Z'_2)=0$ (because $u'_2$ solves \eqref{eq:conductivity}). This entails the conditions 
	    \begin{align*}
		t_1>0, \quad t_1(t_2+1)<0, \quad -(t_2+1)\left( t_1-\left( \frac{t_3+1}{2} \right)^2 \right)>0 \qandq t_1=-\frac{t_2t_3}{t_2+t_3}.
	    \end{align*}
	    These conditions can be jointly satisfied for instance by picking $t_1=6$, $t_2=-2$ and $t_3=3$, thus Hypothesis \ref{hy:full rank}.B is fulfilled in the case $n=3$. 

	\item[Odd case $n=2m+3$.]  When $n=2m+3$ for $m\ge 0$, we build solutions based on the previous two cases. Let us pick 
	    \begin{align*}
		u_i &= x_i, \quad 1\le i\le n, \\
		u_{n+1} &= \sum_{i=1}^m x_{2i-1}x_{2i}+x_{2m+1}x_{2m+2}+x_{2m+2}x_{2m+3} \\
		u_{n+2} &= \sum_{i=1}^m \frac{(x_{2i-1}^2-x_{2i}^2)}{2}+\frac{1}{12}x^2_{2m+1}-\frac{1}{4}x^2_{2m+2}+\frac{1}{6}x^2_{2m+3}.	
	    \end{align*}
	    Then one can simply check that $\tilde Z_j$ is of the form
	    \begin{align*}
		\tilde Z_j=\left[ \begin{array}{c|c}
			Z_j & 0_{2m\times 3} \\ \hline  0_{3\times 2m} & Z'_j
		\end{array}\right], \quad j=1,2,
	    \end{align*}
	    where $Z_j$/$Z'_j$ are constructed as in the case $n=2m$/$n=3$, respectively. Accordingly, let us construct $\Omega_{1,2}$ by block using the previous two cases, 
	    \begin{align*}
		\tilde \Omega_j=\left[ \begin{array}{c|c}
			\Omega_j & 0_{2m\times 3} \\ \hline  0_{3\times 2m} & \Omega'_j
		\end{array}\right],
	    \end{align*}
	    and the $S$ matrix so obtained becomes 
	    \begin{align*}
		\tilde S=\left(\tilde Z_2^{\star} \tilde Z_1^{T} \tilde\Omega_1 + H\tilde Z_1^{T} \tilde\Omega_2 \right)^{sym}=\left[\begin{array}{c|c}
			\Imm_{2m} & 0_{2m\times 3} \\ \hline 0_{3\times 2m} & S'
		\end{array} \right],
	    \end{align*}
	    where $S'$ is the definite positive matrix constructed in the case $n=3$. Again, Hypothesis \ref{hy:full rank}.B is fulfilled.
    \end{description}

    {\bf Fulfilling Hypothesis \ref{hy:full rank} with $\gamma$ constant.} Let $\{v_i\}_{i=1}^{n+2}$ denote the harmonic polynomials constructed in any case above (i.e. $n$ even or odd) with $\gamma = \Imm_n$, and denote $Z_1^0, Z_2^0, H^0, \Omega_1^0, \Omega_2^0$ and $S^0 = (Z_2^{0\star}Z_1^{0T} \Omega_1^0 + H^0 Z_1^{0T} \Omega_2^0)^{sym}$ the corresponding matrices. Define here, for $1\le i\le n$, $u_i(x) := v_i(x)$ and for $i=n+1,n+2$, $u_i(x) = v_i (\gamma^{-\frac{1}{2}}x)$, all solutions of \eqref{eq:conductivity} with constant $\gamma$. Then we have that $Z_i = \gamma^{-\frac{1}{2}} Z_i^0 \gamma^{-\frac{1}{2}}$ for $i=1,2$ and $H = \gamma$. Upon defining $\Omega_i:= \gamma^{\frac{1}{2}}\Omega_i^0 \gamma^{\frac{1}{2}} \in A_n(\Rm)$ for $i=1,2$, direct calculations show that 
    \begin{align*}
	S = (Z_2^{\star}Z_1^{T} \Omega_1 + H Z_1^{T} \Omega_2)^{sym} = \gamma^{\frac{1}{2}} S^0 \gamma^{\frac{1}{2}}.
    \end{align*}
    Whenever $Z_1^0$ is non-singular, so is $Z_1$ and whenever $S_0$ is symmetric definite positive, so is $S$. The proof is complete. 
\end{proof}

\paragraph{Isotropic tensors.} As a second test case, we show that, based on the construction of complex geometrical optics (CGO) solutions, Hypothesis \ref{2 sol} can be satisfied globally for an isotropic tensor $\gamma = \beta\Imm_n$ when $\beta$ is smooth enough. CGO solutions find many applications in inverse conductivity/diffusion problems, and more recently in problems with internal functionals \cite{Bal2011a,Monard2011a,Bal2012}. As established in \cite{Bal2010}, when $\beta\in H^{\frac{n}{2}+3+\varepsilon}(X)$, one is able to construct a complex-valued solution of \eqref{eq:conductivity} of the form 
    \begin{align}
	u_\bmrho = \frac{1}{\sqrt{\beta}} e^{\bmrho\cdot x} (1+ \psi_\bmrho),
	\label{eq:urho}
    \end{align}
    where $\bmrho\in \Cm^n$ is a complex frequency satisfying $\bmrho\cdot\bmrho = 0$, which is equivalent to taking $\bmrho = \rho (\bk + i\bk^\perp)$ for some unit orthogonal vectors $\bk, \bk^\perp$ and $\rho = |\bmrho|/\sqrt{2}>0$. The remainder $\psi_{\bmrho}$ satisfies an estimate of the form $\rho\psi_{\bmrho} = \O(1)$ in $\C^1(\overline{X})$. The real and imaginary parts of $\nabla u_\bmrho$ are almost orthogonal, modulo an error term that is small (uniformly over $X$) when $\rho$ is large. We use this property here to fulfill Hypothesis \ref{2 sol}. 

\begin{proof}[Proof of Proposition \ref{prop:isotropic}] 
    Pick two unit orthogonal vectors $\bk$ and $\bk^\perp$, and consider the CGO solution $u_{\bmrho}$ as in \eqref{eq:urho} with $\bmrho = \rho(\bk + i\bk^\perp)$ for some $\rho>0$ which will be chosen large enough later. Computing the gradient of $u_{\bmrho}$, we arrive at 
    \begin{align*}
	\nabla u_{\bmrho} = e^{\bmrho\cdot x} (\bmrho + \bmphi_{\bmrho}), \quad\text{with} \quad \bmphi_\bmrho := \nabla\psi_\bmrho - \psi_{\bmrho} \nabla \log\sqrt{\beta},
    \end{align*}
    with $\sup_{\overline{X}} |\bmphi_\bmrho| \le C$ independent of $\bmrho$. Splitting into real and imaginary parts, each of which is a real-valued solution of \eqref{eq:conductivity}, we obtain the expression
    \begin{align*}
	\nabla u_\bmrho^\Re &= \frac{\rho e^{\rho\bk\cdot x}}{\sqrt{\beta}} \left( (\bk + \rho^{-1} \bmphi_\bmrho^\Re) \cos(\rho \bk^\perp\cdot x) - (\bk^\perp + \rho^{-1} \bmphi_\bmrho^\Im) \sin (\rho\bk^\perp\cdot x) \right), \\
	\nabla u_\bmrho^\Im &= \frac{\rho e^{\rho\bk\cdot x}}{\sqrt{\beta}} \left(  (\bk^\perp + \rho^{-1} \bmphi_\bmrho^\Im) \cos(\rho \bk^\perp\cdot x) + (\bk + \rho^{-1} \bmphi_\bmrho^\Re) \sin (\rho\bk^\perp\cdot x)  \right),
    \end{align*}
    from which we compute directly that 
    \begin{align*}
	|\nabla u_{\bmrho}^\Re|^2 |\nabla u_{\bmrho}^\Im|^2-(\nabla u_{\bmrho}^\Re\cdot \nabla u_{\bmrho}^\Im)^2 = \frac{\rho^2 e^{2\rho\bk\cdot x}}{\beta} ( 1 + o (\rho^{-1}) ).
    \end{align*}
    Therefore, for $\rho$ large enough, the quantity in the left-hand side above remains bounded away from zero throughout $X$, and the proof is complete.   
\end{proof}

\subsection{Push-forward by diffeomorphism}

Let $\Psi:X\to \Psi(X)$ be a $W^{1,2}$-diffeomorphism where $X$ has smooth boundary. Then for $\gamma\in\Sigma(X)$, the push-forwarded tensor $\Psi_\star \gamma$ defined in \eqref{eq:pushfwd} belongs to $\Sigma(\Psi(X))$ and $\Psi$ pushes forward a solution $u$ of \eqref{eq:conductivity} to a function $v = u\circ\Psi^{-1}$ satisfying the conductivity equation
\begin{align*}
    -\nabla_y \cdot (\Psi_\star \gamma \nabla_y v) = 0 \quad (\Psi(X)), \quad v|_{\partial (\Psi(X))} = g\circ\Psi^{-1},
\end{align*}
moreover $\Psi$ and $\Psi|_{\partial X}$ induce respective isomorphisms of $H^1(X)$ and $H^{\frac{1}{2}}(\partial X)$ onto $H^1(\Psi(X))$ and $H^{\frac{1}{2}}(\partial (\Psi(X)))$. %We define the relation $(\gamma,X)\sim (\gamma',X')$ iff there exists $\Psi:X\to X'$ a diffeomorphism onto $X'$ satisfying \eqref{eq:psiOK}, such that $\gamma' = \Psi_\star \gamma$. It is clear that $\sim$ is an equivalence relation.

\begin{proof}[Proof of Proposition \ref{prop:pushfwd}] 
    The hypotheses of interest all fomulate the linear independence of some functionals in some sense. We must see first how these functionals are push-forwarded via the diffeomorphism $\Psi$. For $1\le i\le m$, we denote $v_i := \Psi_\star u_i = u_i \circ\Psi^{-1}$ as well as $\Psi_\star H_i := [\Psi_\star\gamma] \nabla_y v_i$ where $y$ denotes the variable in $\Psi(X)$. Direct use of the chain rule allows to establish the following properties, true for any $x\in X$:
    \begin{align}
	\begin{split}
	    \nabla u_i (x) &= [D\Psi]^T(x) \nabla_y v_i (\Psi(x)), \\
	    H_i(x) &= \gamma\nabla u_i (x) = |J_\Psi|(x) [D\Psi]^{-1} \Psi_\star H (\Psi(x)), \\
	    Z_i(x) &= [D\Psi]^T (x) \Psi_\star Z_i (\Psi(x)), 	    
	\end{split}
	\label{eq:rels}
    \end{align}
    where we have defined $\Psi_\star Z_i$ the matrix with columns
    \begin{align*}
	[\Psi_\star Z_i]_{,j} = - \nabla_y  \frac{\det (\nabla_y v_1, \dots, \overbrace{\nabla_y v_{n+i}}^{j}, \dots, \nabla_y v_n)}{\det (\nabla_y v_1,\dots,\nabla_y v_n)}, \quad 1\le j\le n. 
    \end{align*}

    {\bf Hypotheses \ref{2 sol} and \ref{main hypo}.} Since $[D\Psi]$ is never singular over $X$, relations \eqref{eq:rels} show that for any $1\le k\le n$, the vectors fields $(\nabla u_1,\dots, \nabla u_k)$ are linearly dependent at $x$ if and only if the vectors fields $(\nabla_y v_1, \dots, \nabla_y v_k)$ are linearly dependent at $\Psi(x)$. The case $k=2$ takes care of Hyp. \ref{2 sol} while the case $k=n$ takes care of Hyp. \ref{main hypo}. 

    {\bf Hypothesis \ref{hyp W}.} If we denote 
    \begin{align*}
	\Psi_\star \W (\Psi(x)) = \text{span} \left\{ (\Psi_\star Z_k (\Psi_\star H)^T \Omega)^{sym}, \quad \Omega\in A_n(\Rm), 1\le k\le m \right\},
    \end{align*}
    direct computations show that 
    \begin{align*}
	\W(x) = [D\Psi(x) ]^T\cdot  \Psi_\star \W (\Psi(x)) \cdot  [D\Psi(x)],
    \end{align*}
    thus since $D\Psi(x)$ is non-singular, we have that $\dim \W(x) = \dim \Psi_\star \W (\Psi(x))$, so the statement of Proposition holds for Hyp. \ref{hyp W}. 
    
    {\bf Hypothesis \ref{hy:full rank}.} The transformation rules \eqref{eq:rels} show that $Z_1$ is nonsingular at $x$ iff $\Psi_\star Z_1$ is nonsingular at $\Psi(x)$, so the statement of the proposition holds for Hyp. \ref{hy:full rank}.A.

    Second, for two $A_n(\Rm)$-valued functions $\Omega_1(x)$ and $\Omega_2(x)$, and upon defining $\Psi_\star \Omega_1$, $\Psi_\star \Omega_2$ as in \eqref{eq:psiomega}, as well as 
    \begin{align*}
	\Psi_\star S := \left( [\Psi_\star Z_2]^{-T} [\Psi_\star Z_1]^T \Psi_\star \Omega_1 + [\Psi_\star H][\Psi_\star Z_1]^T \Psi_\star \Omega_2 \right)^{sym},
    \end{align*}
    direct use of relations \eqref{eq:rels} yield the relation
    \begin{align*}
	S(x) = [D\Psi(x)]^{-1} \cdot \Psi_\star S (\Psi(x)) \cdot [D\Psi(x)]^{-T}, \quad x\in X,
    \end{align*}
    and since $D\Psi$ is uniformly non-singular, $S$ is uniformly elliptic if and only if $\Psi_\star S$ is, so the statement of the proposition holds for Hyp. \ref{hy:full rank}.B.
\end{proof}

\subsection{Generic reconstructibility}

We now show that, in principle, any $\C^{1,\alpha}$-smooth conductivity tensor is locally reconstructible from current densities. The proof relies on the Runge approximation for elliptic equations, which is equivalent to the unique continuation principle, valid for conductivity tensors with Lipschitz-continuous components. 

This scheme of proof was recently used in the context of other inverse problems with internal functionals \cite{Bal2012,Monard2012a}, and the interested reader is invited to find more detailed proofs there. 

\begin{proof}[Proof of Proposition \ref{prop:Runge}.]
    Let $x_0\in X$ and denote $\gamma_0 := \gamma(x_0)$. We first construct solutions of the constant-coefficient problem by picking the functions defined in \eqref{eq:constantsol} (call them $v_1,\dots, v_{n+2}$) and by defining, for $1\le i\le n+2$, $u_i^0(x) := v_i(x) - v_i(x_0)$. These solutions satisfy $\nabla\cdot(\gamma_0\nabla u_i)=0$ everywhere and fulfill Hypotheses \ref{main hypo} and \ref{hyp W} globally. 

    Second, from solutions $\{u_i^0\}_{i=1}^{n+2}$, we construct a second family of solutions $\{u_i^r\}_{i=1}^{n+2}$ via the following equation
    \begin{align}
	\nabla\cdot(\gamma \nabla u_i^r) = 0\quad (B_{3r}), \quad u_i^r|_{\partial B_{3r}} = u_i^0, \qquad 1\le i\le n+2,
	\label{eq:localsol}
    \end{align}
    where $B_{3r}$ is the ball centered at $x_0$ and of radius $3r$, $r$ being tuned at the end. The maximum principle as well as interior regularity results for elliptic equations allow to deduce the fact that 
    \begin{align}
	\lim_{r\to 0} \quad  \max_{1\le i\le n+2} \|u_i^r - u_i^0\|_{\C^2(B_{3r})} = 0.
	\label{eq:localestimate}
    \end{align}
    
    Third, assuming that $r$ has been fixed at this stage, the Runge approximation property allows to claim that for every $\varepsilon >0$ and $1\le i\le n+2$, there exists $g_i^\varepsilon\in H^{\frac{1}{2}} (\partial X)$ such that 
    \begin{align}
	\|u_i^\varepsilon - u_i^r\|_{L^2(B_{3r})} \le \varepsilon, \where\ u_i^\varepsilon \text{ solves } \eqref{eq:conductivity} \text{ with } u_i^\varepsilon|_{\partial X} = g_i^\varepsilon,
	\label{eq:L2est}
    \end{align}
    which, combined with interior elliptic estimates, yields the estimate 
    \begin{align*}
	\|u_i^\varepsilon - u_i^r\|_{\C^2 (\overline{B_r})} \le \frac{C}{r^2} \|u_i^\varepsilon - u_i^r\|_{L^\infty(B_{2r})} \le \frac{C}{r^2} \varepsilon,
    \end{align*}
    Since $r$ is fixed at this stage, we deduce that 
    \begin{align}
	\lim_{\varepsilon\to 0} \quad \max_{1\le i\le n+2} \|u_i^\varepsilon-u_i^r\|_{\C^2(B_r)} = 0.
	\label{eq:epsilonestimate}
    \end{align}
    
    Completing the argument, we recall that Hypotheses \ref{main hypo} and \ref{hyp W} are characterized by continuous functionals (say $f_2$ and $f_3$) in the topology of $\C^{2,\alpha}$ boundary conditions. While the first step established that $f_2 >0$ and $f_3>0$ for the constant-coefficient solutions, limits \eqref{eq:localestimate} and \eqref{eq:epsilonestimate} tell us that there exists a small $r>0$, then a small $\varepsilon>0$ such that $\max_{1\le i\le n+2} \|u_i^\varepsilon-u_i^0\|_{\C^2(B_r(x_0))}$ is so small that, by the continuity of $f_2$ and $f_3$, these functionals remain positive. Hypotheses \ref{main hypo} and \ref{hyp W} are thus satisfied over $B_r$ by the family $\{u_i^\varepsilon\}_{i=1}^{n+2}$ which is controlled by boundary conditions. The proof is complete.
\end{proof}

\begin{remark}[On generic global reconstructibility] Let us mention that from the local reconstructibility statement above, one can establish a global reconstructibility one. Heuristically, by compactness of $\overline{X}$, one can cover the domain with a finite number of either neighborhoods as above or subdomains diffeomorphic to a half-ball if the point $x_0$ is close to $\partial X$, over each of which $\gamma$ is reconstructible. One can then patch together the local reconstructions using for instance a partition of unity, and obtain a globally reconstructed $\gamma$. The additional technicalities that this proof incurs may be found in \cite{Bal2012}.

    As a conclusion, for any $\C^{1,\alpha}$-smooth tensor $\gamma$, there exists a finite $N$ and non-empty open set $\O\subset (\C^{2,\alpha}(\partial X))^N$ such that any $\{g_i\}_{i=1}^N\in \O$ generates current densities that reconstruct $\gamma$ uniquely and stably (in the sense of estimate \eqref{eq:jointstab2}) throughout $X$. 
\end{remark}

\appendix

\section{Exterior calculus and notations}\label{app}

Throughout this paper, we use the following convention regarding exterior calculus. Because we are in the Euclidean setting, we will avoid the flat operator notation by identifying vector fields with one-forms via the identification $\bfe_i\equiv \bfe^i$ where $\{\bfe_i\}_{i=1}^n$ and $\{\bfe^i\}_{i=1}^n$ denote bases of $\Rm^n$ and its dual, respectively. In this setting, if $V = V^i \bfe_i$ is a vector field, $d V$ denotes the two-vector field 
\begin{align*}
    dV = \sum_{1\le i<j \le n} (\partial_i V^j - \partial_j V^i) \bfe_i\wedge\bfe_j.
\end{align*}
A two-vector field can be paired with two other vector fields via the formula 
\begin{align*}
    A \wedge B (C,D) = (A\cdot C)(B\cdot D) - (A\cdot D) (B\cdot C), 
\end{align*}
which allows to make sense of expressions of the form 
\begin{align*}
    dV (A, \cdot) = \sum_{1\le i<j \le n} (\partial_i V^j - \partial_j V^i) ((A\cdot\bfe_i) \bfe_j - (A\cdot\bfe_j) \bfe_i).
\end{align*}
Note also the following well-known identities for $f$ a smooth function and $V$ a smooth vector field, rewritten with the notation above:
\begin{align}
    d (\nabla f) &= 0, \qquad f\in \C^2(X),     \label{eq:d2zero} \\
    d (fV) &= \nabla f \wedge V + fdV. \label{eq:dfV}
\end{align}

\section*{Acknowledgments}

This work was supported in part by NSF grant DMS-1108608 and AFOSR Grant NSSEFFFA9550-10-1-0194. FM is partially supported by NSF grant DMS-1025372.

\end{document}